\newcommand{\R}{\mathbb{R}}
\newcommand{\LL}{{\rm{L}}}
\newcommand{\CC}{{\rm{C}}}
\newcommand{\WW}{{\rm{W}}}
\newcommand{\LLinf}{\LL^\infty(\R)}
\newcommand{\lt}{\left}
\newcommand{\rt}{\right}
\def\[{[\![}
\def\]{]\!]}
\newcommand{\sgn}{{\rm{sign}}}
\newcommand{\dd}{{\rm{d}}}
\newcommand{\im}{{\rm{i}}}
\newcommand{\pv}{{\rm{p.v.}}}
\newcommand{\ee}{{\rm{e}}}
\newcommand{\ii}{{\rm{i}}}
\newcommand{\essinf}{{\rm{ess~inf}}}
\newcommand{\FF}{\mathcal{F}}
\newcommand{\Hilb}{\mathcal{H}}
\newcommand{\loc}{{\rm{loc}}}
\newcommand{\comp}{{\rm{c}}}
\newcommand{\et}	{\quad\text{and}\quad}
\newcommand{\dans}	{\quad\text{in}\quad}
\newcommand{\si}	{\quad\text{if}\quad}
\newcommand{\pour}	{\quad\text{for}\quad}
\newcommand{\sur}	{\quad\text{on}\quad}
\newcommand{\dx}{\lt|\partial_x\rt|}
\newcommand{\ov}{\overline}
\newcommand{\und}{\underline}
\newcommand{\Espaceu}{\LL^\infty_\loc\lt(\R_+,\LLinf\rt)}
\newcommand{\EspSmooth}{\CC^1_{\comp}\lt([0,T), \CC^\infty_{\comp}(\R)\rt)}
\newcommand{\etar}{\eta_{\rm{r}}}
\newcommand{\etal}{\eta_{\rm{l}}}
\newcommand{\AAA}{\mathcal{A}}
\newcommand{\barl}{\hat{l}}
\newcommand{\DD}{\CC^{\infty}_{\comp}\lt(\R\rt)}
\newcommand{\Dprime}{{\mathcal{D}}'}
\newcommand{\wici}{v}
\newtheorem{theorem}{Theorem}[section]
\newtheorem{corollary}[theorem]{Corollary}
\newtheorem{lemma}[theorem]{Lemma}
\newtheorem{proposition}[theorem]{Proposition}
\newtheorem*{proposition*}{Proposition}
\newtheorem*{theorem*}{Theorem}
\newtheorem*{lemma*}{Lemma}
\newtheorem*{corollary*}{Corollary}
\newtheoremstyle{TheoremNum}
      {\topsep}{\topsep}              %%% space between body and thm
      {\itshape}                      %%% Thm body font
      {}                              %%% Indent amount (empty = no indent)
      {\bfseries}                     %%% Thm head font
      {.}                             %%% Punctuation after thm head
      { }                             %%% Space after thm head
      {\thmname{#1}\thmnote{ \bfseries #3}}%%% Thm head spec
\theoremstyle{TheoremNum}
\theoremstyle{remark}
\newtheorem{remark}[theorem]{Remark}
\author{
  Marc Josien
  \footnote{\'Ecole des Ponts and INRIA, 6 et 8 avenue Blaise Pascal, 77455 Marne-La-Vall\'ee Cedex 2, France. Email: marc.josien@enpc.fr}
}
\title{Mathematical properties of the Weertman equation}
\begin{document}
 
 \maketitle
 \begin{abstract}
    We derive here some mathematical properties of the  Weertman equation and show it is the limit of an evolution equation.
    The Weertman equation is a semilinear integrodifferential equation involving a fractional Laplacian.
    In addition to this purely theoretical interest, the results proven here give a solid ground to a numerical approach that we have implemented in~\cite{TPB}.
 \end{abstract}

\paragraph{Keywords} Reaction-advection-diffusion equation, traveling waves, integrodifferential equation, the Weertman equation, fractional Laplacian

 \section{Introduction}
 
  \paragraph{Motivation}

    We derive here some mathematical properties of the  Weertman equation and show it is the limit of an evolution equation.
    Our motivation comes from our interest in materials science.
    The problem we consider, however classical, enjoys the following specificity that it involves the dissipative integrodifferential operator~$-\dx$ (also denoted as~$-(-\Delta)^{1/2}$), which has~$-|k|$ as Fourier symbol.
    In addition to this purely theoretical interest, the results proven here give a solid ground to a numerical approach that we have implemented in~\cite{TPB}.
    
    Our starting point is the so-called Weertman equation  (see~\cite{Rosakis}):
    \begin{align}\label{W}
	  &-\dx \eta(x)+c \eta'(x)=F'(\eta(x)) \qquad \text{for}\quad x \in \R,
      \end{align}  
      with boundary conditions
      \begin{align}
	  \label{Relimite}
	  &\lim_{x \rightarrow -\infty} \eta(x)=\etal \et \lim_{x \rightarrow +\infty} \eta(x)=\etar,
      \end{align}
     where \textit{both} the scalar~$c \in \R$ (called velocity) and the function~$\eta \in \CC^2(\R)$ are the unknowns, and where~$\etal<\etar$.
     The function~$F \in \CC^3(\R)$ is a bistable potential; namely, it satisfies
      \begin{align}
	\label{Bistable}
	  &F'(\etal)=F'(\etar)=0,\quad
	  F''(\etal)> 0, \et F''(\etar)>0.
      \end{align}
    
      From a physical point of view, Equation~\eqref{W} is a nondimensionalized form of the Weertman equation for steadily-moving dislocations in materials science  (see~\cite{Rosakis}). Dislocations are linear defects in crystals, the motion of which is responsible for the plasticity of metals. 
      From a physical standpoint, the function~$\eta$ represents a discontinuity between the local relative material displacement~$u(x,y)$ in the upper and in the lower half-spaces surrounding the glide plane on which moves the dislocation line (see Figure~\ref{FigDisloc}); see, e.g., \cite{Hirth} for details.
      In~\eqref{W}, the term~$\dx \eta$ accounts for the long-range elastic self-interactions that tend to spread the core. This repulsive interaction is counterbalanced by the nonlinear pull-back force~$F'(\eta)$, which binds together the upper and lower half-spaces. Moreover, the moving dislocation is subjected to various drag mechanisms encoded into the term~$c \eta'$.
      From a broader perspective, the function~$\eta$ can be understood as a moving phase-transformation front between the states~$\etal$ and~$\etar$  (see Figure~\ref{FigDisloc}), which are local minimizers of the potential~$F$.
      %----------------------------------------------------
      \begin{figure}[ht]
	  \begin{center}
	      \includegraphics[width=\textwidth]{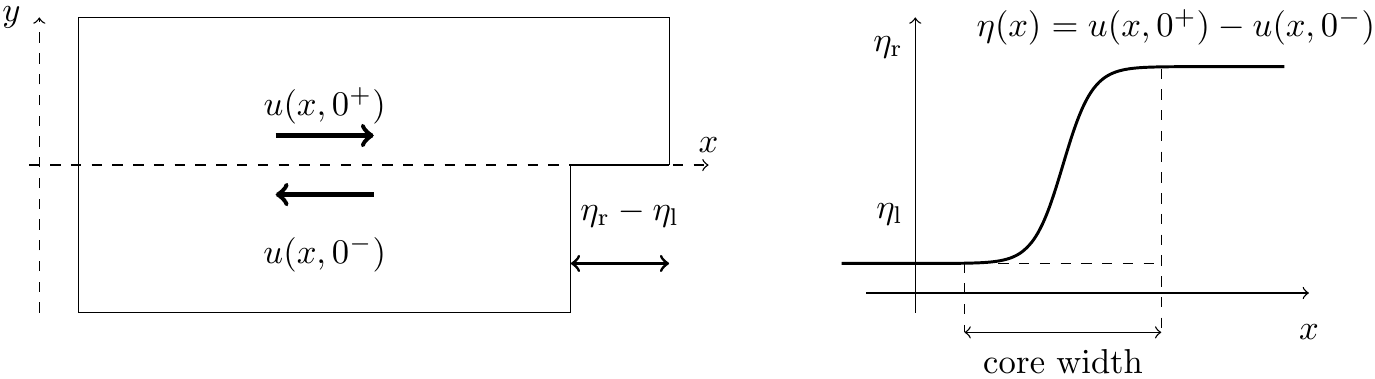}
	  \end{center}
	  \caption{\label{FigDisloc}
	   Typical shape of~$\eta(x)$, solution to~\eqref{W};  here, $u(x,y)$ is the material displacement.}
      \end{figure}
      %----------------------------------------------------

      \paragraph{Traveling wave of reaction-diffusion equation}
      Equation~\eqref{W} is a special case of the general equation 
      \begin{align}\label{S}
	\lt\{
	\begin{aligned}
	  &\mathcal{A}[\eta](x)+c\eta'(x)=0  \quad\text{for}\quad x \in \R,\\
	  &\eta(-\infty)=\etal \et \eta(+\infty)=\etar,
	\end{aligned}
	\rt.
      \end{align}
      where~$\mathcal{A}[\eta]=L\eta-F'(\eta)$ is a nonlinear operator, in which~$L$ is a diffusive operator and~$F$ is a bistable potential. 
      As is easily seen, Equation~\eqref{S} describes the traveling waves of the following reaction-diffusion equation:
      \begin{align}\label{Sd}
	\lt\{
	\begin{aligned}
	  &\partial_t u(t,x)=\mathcal{A}[u(t,\cdot)](x)&&\text{for}\quad x \in \R,\\
	  &u(0,x)=u_0(x) &&\text{for}\quad x \in \R,
	\end{aligned}
	\rt.
      \end{align}
      in the sense that, if~$u(t,x)=\eta(x-ct)$ is a traveling wave satisfying~\eqref{Sd}, then~$(\eta,c)$ solves~\eqref{S}.
      \textit{Ipso facto}, finding a solution to~\eqref{S} amounts to finding traveling waves solutions to~\eqref{Sd}.
      Natural questions thus arises:
      \begin{enumerate}
        \item[(i)]{Does Equation~\eqref{S} have one and only one solution~$(\eta,c)$? \label{i}}
        \item[(ii)]{Which properties does the solution to Equation~\eqref{S} enjoy?}
        \item[(iii)]{Is the traveling wave~$\eta(x-ct)$, for~$(\eta,c)$ solution to~\eqref{S}, an attractor of the dynamical system~\eqref{Sd}?\label{ii}}
      \end{enumerate}
      These questions have been addressed by many authors for various operators~$L$ and for bistable potential~$F$ satisfying  -most of the time- the extra condition that~$F$ does not admit any local minimum between~$\etal$ and~$\etar$.
      Other types of nonlinearities, not considered here, have attracted much attention.
      See ~\cite{Volpert_1994} for the classification of traveling waves and an overview of reaction-diffusion equations.
      
      In the seminal article~\cite{Sattinger}, Sattinger remarked that if~$(\eta,c)$ is a solution to~\eqref{S}, then~$(\eta(\cdot+\xi),c)$ is also a solution to~\eqref{S}, for arbitrary~$\xi \in \R$. Therefore, solutions to~\eqref{S} can at most be unique \textit{up to a translation}. In this regard, he introduced the notion of asymptotic stability of traveling wave and proved that the solution~$(\eta,c)$ to~\eqref{S}, if it exists, is asymptotically stable under general assumptions about the spectrum of~$\mathcal{A}$.
      
      In the celebrated article~\cite{Fife}, Fife and McLeod answered the Questions (i) and (iii) in the case where~$L$ is the Laplacian. They proved indeed that if~$F$ satisfies~\eqref{Bistable} and has no local minimum between~$\etal$ and~$\etar$, then there exists a solution~$(\eta,c)$ to~\eqref{S},  which is unique up to a translation,
      and that this solution is globally asymptotically stable. 
      Namely, for all initial conditions~$u_0$ taking values in~$[\etal,\etar]$ such that~$u_0(-\infty)=\etal$ and~$u_0(+\infty)=\etar$, there exist~$\xi \in \R$, $K>0$ and~$\kappa>0$ such that the solution~$u$ of~\eqref{Sd} satisfies
      \begin{align}\label{CvgGen}
        \lt\| u(t,\cdot) - \eta(\cdot-\xi-ct) \rt\|_{\LL^\infty(\R)} \leq K \ee^{-\kappa t},
      \end{align}
      for all~$t\in \R_+$.
      Among other important concepts, all amenable to a wide class of dissipative operators, it is observed in~\cite{Fife} that~$\mathcal{A}$ satisfies a comparison principle.
      Thus, any solution~$u(t,x)$ of~\eqref{Sd} can be squeezed between a super-solution~$w_{+1}(t,x)$ and a sub-solution~$w_{-1}(t,x)$, both at a controlled distance from~$\eta(x-\xi-ct)$.
      
      In a more recent article ~\cite{XinfuChen}, Chen combined this squeezing approach with an iterative technique. 
      Under technical assumptions about the operator~$\mathcal{A}$, he proved the global asymptotic stability of the traveling waves of~\eqref{Sd}, provided that there exists a monotonic solution~$\eta$ to~\eqref{S}. 
      In this context, a positive answer to Question (i) and technical assumptions imply, using Chen's \textit{squeezing} technique, a positive answer to Question (iii). 
      We use Chen's approach in the present article.
      
      The article~\cite{XinfuChen} also provides tools for establishing the existence and the uniqueness of a solution to~\eqref{S}.
      They have been used in~\cite{Chmaj} to positively answer to Question (i) in the case where~$L$ is the fractional Laplacian~$-\dx^\alpha$, for~$\alpha \in (0,2)$ (the latter operator has~$-|k|^\alpha$ as Fourier symbol).
      Also, in~\cite{Achleitner}, the authors have adapted Chen's squeezing technique to prove that the solution~$(\eta,c)$ to~\eqref{S} is globally asymptotically stable in the sense of~\eqref{CvgGen}, in a general framework including the case~$L=-\dx^{\alpha}$, for~$\alpha \in (1,2)$. 
      However, they underlined the fact that the case~$\alpha \leq 1$ (and in particular~$\alpha=1$), is still an open question. 
      This motivates our study.

      With an approach  different from~\cite{XinfuChen}, the existence and the uniqueness of a solution to~\eqref{S}, for~$L=-\dx^\alpha$ and~$\alpha \in (0,2)$, has been proved in~\cite{Cabre1,Cabre2} in the special case where~$c=0$  (the so-called balanced case).
      These results have been generalized by~\cite{Gui}.
      Assuming that~$F \in \CC^3(\R)$ satisfy~\eqref{Bistable} and the following extra condition:
      \begin{align}
	\label{Bis2}
	\lt\{
	  \begin{aligned}
	    &F(u) > F(\etal), \qquad \forall u \in (\etal,\etar),\\
	    &F'(u)>0 \quad\text{or}\quad F(u) >F(\etar), \qquad \forall u \in (\etal,\etar),
	  \end{aligned}
	\rt.
      \end{align}
      it is showed in~\cite[Th.\ 1.1]{Gui}  that there exist a unique~$c \in \R$ and an increasing function~$\eta \in \CC^2(\R)$, which is unique up to a translation, that solve~\eqref{W}. 
      Conditions~\eqref{Bistable} and~\eqref{Bis2} mean that the potential~$F(u)$ has two major wells in~$u=\etal$ and~$u=\etar$ (the states~$\etal$ and~$\etar$ are therefore stable), and that its behavior is controlled between these wells ; for example, the potential can have minor wells (see Figure~\ref{Fig.F}). 
      The proof of~\cite{Gui} relies on special solutions to \eqref{S} built in ~\cite{Cabre1,Cabre2}, which, by homotopy techniques, are used to find the solutions to the general case. 
      The result of~\cite{Gui} will be our starting point for proving the global asymptotic stability of the traveling waves of~\eqref{Sd} for~$L=-\dx$.
      
     Additionally, the authors of~\cite{Cabre1, Cabre2,Gui} have also studied some properties of the solutions to \eqref{S}.
      In particular, when~$L=-\dx$, they have shown that~$\eta'>0$ and that there exist constants~$B>A>0$ such that, for all~$|x| \geq 1$,
      \begin{align}
        A|x|^{-2} \leq  \eta'(x) \leq B|x|^{-2}.
	\label{O2}
      \end{align}
      See~\cite[Th.~2.7]{Cabre2} for the special case where~$c=0$ and~\cite[Prop.~3.2]{Gui} for the general case.
      Moreover, the following identity is proved (see~\cite[Prop.\ 4.1]{Gui}):
      \begin{align}
        \label{Idc1}
	  &c=\lt[F(\etar)-F(\etal)\rt] \lt( \int_{\R}\lt|\eta' \rt|^2 \rt)^{-1}.
      \end{align}
      Formula \eqref{Idc1} is useful because it provides the sign of~$c$ just by considering the values ~$F(\etal)$ and~$F(\etar)$.
	%----------------------------------------------------
	\begin{figure}[ht]
	    \begin{center}
		\includegraphics[width=6cm]{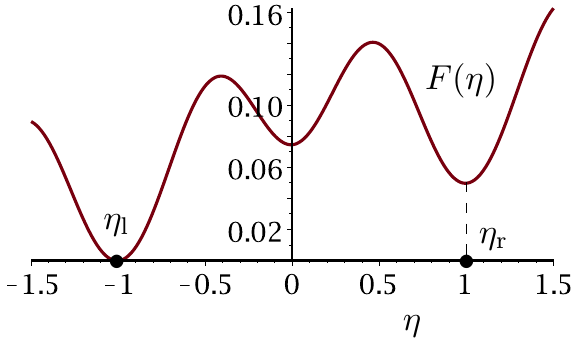}
	    \end{center}
	    \caption{\label{Fig.F} A double-well ``camel-hump'' potential~$F$, $\etal=-1$, $\etar=1$.}
	\end{figure}
	%----------------------------------------------------

       As remarked in~\cite{Cabre1}, if~$c=0$, then~\eqref{W} can be interpreted as the restriction to the boundary of an elliptic problem with Neumann boundary condition. 
	    If indeed~$u$ solves the following problem:
	    \begin{align}\label{ProbEspace}
		\lt\{
		\begin{aligned}
	        &\Delta u(x,y)+c\partial_y u(x,y) = 0 &&\dans \R\times \R_+, \\
	        &\partial_y u(x,0)=F'(u(x,0)) &&\sur \R \times \{0\},
	        \end{aligned}
	        \rt.
	    \end{align}
	    for~$c=0$, then~$\eta(x):=u(x,0)$ is a solution to~\eqref{W}. 
	    However, we stress that, when~$c\neq 0$, Equation~\eqref{ProbEspace} describes a diffusive traveling wave in the half-space.
	    In this case, \eqref{W} is \textit{not} the restriction to the boundary of the problem~\eqref{ProbEspace}, which instead reads
	    \begin{align}
		\label{CabreConsul}
	        \lt( -\Delta - c \partial_x \rt)^{1/2} \eta(x)=-F'(\eta(x)).
	    \end{align}
	    We refer to~\cite{CabreConsulMande} for a mathematical study of~\eqref{CabreConsul}.
	    
	    But, we mention for completeness that \eqref{W} is in fact the restriction to the boundary of the following elliptic equation
	    \begin{align}\label{ProbEspace2}
		\lt\{
		\begin{aligned}
	        &\Delta u(x,y) = 0 &&\dans \R\times \R_+, \\
	        &\partial_y u(x,0)+c\partial_x u(x,0)=F'(u(x,0)) &&\sur \R \times \{0\}.
	        \end{aligned}
	        \rt.
	    \end{align}
	    In a physical context, the latter is envisioned as an elastic equation in the half-plane with a nonlinear boundary condition.
	    We briefly justify it. If indeed we take the Fourier transform with respect to~$x$, denoted as~$\mathcal{F}_x$, of  the first equation of \eqref{ProbEspace2}, and if we restrict on bounded solutions, we obtain
	    \begin{align*}
	        \mathcal{F}_x\lt\{u(\cdot,y)\rt\}(k)= \ee^{-|k|y} \mathcal{F}_x\lt\{u(\cdot,0)\rt\}(k).
	    \end{align*}
	    Injecting the above information in the second equation of \eqref{ProbEspace2} then yields~\eqref{W} if we denote~$\eta(x)=u(x,0)$ (recall that~$\dx$ is an operator which has~$|k|$ as Fourier symbol).

      \paragraph{Main results}
	Our first result concerns the asymptotic expansion of  the solution~$\eta$ to~\eqref{W}. 
	The following proposition is a refinement of results of~\cite{Cabre1,Cabre2,Gui}:
	\begin{proposition}\label{PropAsymp}
	    Let~$\etal<\etar$ and~$F \in \CC^3(\R)$ satisfy~\eqref{Bistable}.
	    Assume that~$(\eta,c)$ is a solution to~\eqref{W} and \eqref{Relimite} such that~$\eta \in \CC^2(\R)$ is an increasing function satisfying~$\eta'>0$ and~\eqref{O2}.
	    Then~$\eta$ has the following asymptotes:
	  \begin{align}
	    \label{Asympt1}
	    &\eta(x)-\etar \underset{x \rightarrow + \infty}{\sim} \frac{\etal-\etar}{\pi F''(\etar)} x^{-1}, && \text{and} && \eta(x)-\etal \underset{x \rightarrow - \infty}{\sim} \frac{\etal-\etar}{\pi F''(\etal)} x^{-1}.
	  \end{align}
	\end{proposition}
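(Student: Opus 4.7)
The strategy is to reduce \eqref{W} to an asymptotic analysis of the Hilbert transform of $\eta'$. The bound $\eta'(y)\le B|y|^{-2}$ for $|y|\ge 1$ ensures that $\eta' \in \LL^1(\R)$ with $\int_\R \eta'(y)\,\dd y = \etar - \etal$. Starting from the singular integral representation
\[
-\dx\eta(x) \;=\; \frac{1}{\pi}\,\pv\int_\R \frac{\eta(y) - \eta(x)}{(y-x)^2}\,\dd y,
\]
an integration by parts---whose boundary terms vanish because $\eta$ is bounded and $(y-x)^{-1}\to 0$---yields $-\dx\eta(x) = -H\eta'(x)$, with $Hf(x) := \frac{1}{\pi}\,\pv\int_\R f(y)/(x-y)\,\dd y$ the Hilbert transform.

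The core step is the asymptotic
\[
H\eta'(x) \;=\; \frac{\etar - \etal}{\pi\,x} + o\!\lt(\frac{1}{|x|}\rt) \qquad \text{as}\quad |x|\to\infty.
\]
Using the identity $(x-y)^{-1} = x^{-1} + y\bigl[x(x-y)\bigr]^{-1}$ together with $\int_\R \eta' = \etar - \etal$, this is equivalent to
\[
R(x) \;:=\; \pv\int_\R \frac{y\,\eta'(y)}{x-y}\,\dd y \;\xrightarrow[|x|\to\infty]{}\; 0.
\]
I would establish this by splitting $\R$ into three regions: \emph{(i)} $|y|\le |x|/2$, where $|x-y|\ge |x|/2$ and the pointwise bound $|y\eta'(y)|\le B/|y|$ (for $|y|\ge 1$) yield an $O((\log|x|)/|x|)$ contribution; \emph{(ii)} $|y|\ge 3|x|/2$, where the integrand is absolutely dominated by $3B/y^2$, producing an $O(1/|x|)$ contribution; \emph{(iii)} the intermediate zone $|y|\in (|x|/2, 3|x|/2)$ containing the principal value singularity at $y = x$. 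In zone~\emph{(iii)}, the substitution $y = x + z$ and the decomposition $(x+z)\eta'(x+z) = x\eta'(x) + [(x+z)\eta'(x+z) - x\eta'(x)]$ exploit the odd symmetry $\pv\int_{-|x|/2}^{|x|/2} \dd z/z = 0$; the remaining piece is controlled via the $\CC^1$-regularity of $y \mapsto y\eta'(y)$ together with the uniform estimate $\eta'(y) = O(1/x^2)$ on this zone.

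Once this asymptotic is in hand, I would plug it into the equation rewritten as $H\eta'(x) = c\eta'(x) - F'(\eta(x))$. The Taylor expansion $F'(\eta(x)) = F''(\etar)\bigl(\eta(x)-\etar\bigr) + O\!\bigl((\eta(x)-\etar)^2\bigr)$, together with the observations $c\eta'(x) = O(1/x^2)$ and $|\eta(x)-\etar| \le \int_x^\infty \eta'(y)\,\dd y \le B/|x|$ (which make both $c\eta'(x)$ and $(\eta(x)-\etar)^2$ of order $o(1/|x|)$), forces the $1/x$-matching
\[
F''(\etar)\bigl(\eta(x) - \etar\bigr) \;=\; \frac{\etal - \etar}{\pi\,x} + o\!\lt(\frac{1}{x}\rt),
\]
which is precisely the first equivalence in~\eqref{Asympt1}. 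The asymptote at $-\infty$ is obtained by the same argument, Taylor-expanding $F'$ around $\etal$ instead.

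I expect the main technical obstacle to lie in zone~\emph{(iii)}, where controlling $(x+z)\eta'(x+z) - x\eta'(x)$ for $z$ near $0$ requires a quantitative modulus of continuity on $\eta'$, essentially an estimate of the form $\eta''(y) = O(1/|y|^3)$ at infinity. Such a bound can be obtained by a bootstrap argument from \eqref{W} differentiated once, exploiting $\eta \in \CC^2(\R)$ and the pointwise bounds on $\eta'$.
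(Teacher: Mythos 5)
Your plan follows essentially the same route as the paper's proof: reduce \eqref{Asympt1} to the statement that $x\,\dx\eta(x)\to\frac{1}{\pi}(\etar-\etal)$ as $x\to+\infty$, prove this by splitting the principal-value integral $\pv\int_{\R}\eta'(y)/(x-y)\,\dd y$ into a far zone (dominated by $\eta'\in\LL^1$), an intermediate zone avoiding the singularity (controlled by \eqref{O2}), and a zone around $y=x$ (controlled by regularity of $\eta'$), and then conclude by Taylor-expanding $F'$ about $\etar$ while discarding $c\eta'(x)=O(x^{-2})$ and $(\eta(x)-\etar)^2=O(x^{-2})$. The decompositions differ only cosmetically (your zones scale with $|x|$, the paper fixes a cut-off $R$ and sends $R\to\infty$ at the very end), and the final matching step is identical.

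The one step that does not hold up as written is your treatment of the singular zone. You propose to control $(x+z)\eta'(x+z)-x\eta'(x)$ via a decay estimate $\eta''(y)=O(|y|^{-3})$, claimed to follow from a bootstrap on the differentiated equation. No such bound is available by elementary means: the paper itself only records the much weaker $|\eta''(x)|\le C|x|^{-2}(1+\ln|x|)$ (its \eqref{O5}), defers even that to \cite{PhD} as requiring technical arguments, and observes the $|y|^{-3}$ rate only for the explicit arctangent solution. Fortunately your argument does not need any pointwise decay of $\eta''$. The paper's Lemma~\ref{LemGui} proves merely $\eta''\in\LL^\infty(\R)$, by a clean Fourier bootstrap (differentiating \eqref{W} once and twice puts the right-hand sides in $\LL^2$, the symbol $-|k|+c\ii k$ then gives $\eta'',\eta'''\in\LL^2$, and Sobolev embedding yields \eqref{O4}), and compensates by shrinking the window around the singularity to width $x^{-2}$: on $|z|\le x^{-2}$ the difference quotient is bounded by a constant times $\|\eta''\|_{\LL^\infty}$ plus an $O(x^{-2})$ term from $\eta'$, contributing $O(x^{-1})$ to your $R(x)$, while on $x^{-2}\le|z|\le |x|/2$ the bound \eqref{O2} gives $O(x^{-1}\ln(x^{3}))$, which still vanishes. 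Replacing your unproven decay claim by this combination ($\eta''\in\LL^\infty$ together with a window of width $x^{-2}$, at the price of a harmless logarithmic loss) closes the proof.
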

	In addition to their theoretical interest, these asymptotes also allow for getting more accurate numerical approximations of~$\eta$, as shown in~\cite{TPB}. 

	 Our second result is:
	\begin{proposition}
	\label{PropIdVitesse}
	    Under the hypotheses of Proposition~\ref{PropAsymp}, $(c,\eta)$ satisfies the following identity:
	    \begin{align}
	    \label{Idc2}
	    &c=\frac{1}{\etar-\etal} \lim_{R \rightarrow + \infty} \int_{-R}^R F'(\eta).
	    \end{align}
      \end{proposition}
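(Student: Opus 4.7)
The plan is to integrate the Weertman equation~\eqref{W} over the symmetric interval~$(-R,R)$ and send $R\to+\infty$. This yields
\begin{align*}
    \int_{-R}^R F'(\eta(x))\,dx = -\int_{-R}^R \dx\eta(x)\,dx + c\lt(\eta(R)-\eta(-R)\rt).
\end{align*}
Thanks to~\eqref{Relimite}, the drift term converges to~$c(\etar-\etal)$, so~\eqref{Idc2} reduces to establishing the vanishing
\begin{align*}
J(R):=\int_{-R}^R \dx\eta(x)\,dx \xrightarrow[R\to+\infty]{} 0.
\end{align*}

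To analyze $J(R)$, the key ingredient is the pointwise identity $\dx\eta = \Hilb(\eta')$, where $\Hilb$ denotes the Hilbert transform; this holds at the Fourier level (both operators have symbol~$|k|\hat\eta(k)$) and can also be verified directly by integrating by parts in the singular-integral representation of~$\dx$, the $O(1/|x|)$ decay of $\eta(x)-\eta(\pm\infty)$ from Proposition~\ref{PropAsymp} killing the boundary contributions at $\pm\infty$. Since $\eta'\in\LL^1(\R)\cap\LL^\infty(\R)$ by~\eqref{O2}, a Fubini exchange---after first regularising the principal value by restricting to $|x-y|>\varepsilon$ and applying dominated convergence as $\varepsilon\to 0^+$---rewrites $J(R)$ in closed form as
\begin{align*}
J(R) = \frac{1}{\pi}\int_\R \eta'(y)\,\log\lt|\frac{R-y}{R+y}\rt|\,dy.
\end{align*}

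The proof then concludes by splitting this integral into three zones. On $\{|y|\leq R/2\}$, a Taylor expansion gives $\lt|\log\lt|\frac{R-y}{R+y}\rt|\rt| \leq C|y|/R$, which combined with $|\eta'(y)|\leq B/y^2$ for $|y|\geq 1$ (from~\eqref{O2}) yields a contribution of order $\log R/R$. On $\{R/2<|y|<2R\}$, the bound $|\eta'(y)|\leq 4B/R^2$ multiplied by the integral $\int_{R/2<|y|<2R}\lt|\log\lt|\frac{R-y}{R+y}\rt|\rt|\,dy = O(R)$ (via the change of variables $y=Rt$, the log having integrable singularities at $t=\pm 1$) gives $O(1/R)$. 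On $\{|y|\geq 2R\}$, one has $\lt|\log\lt|\frac{R-y}{R+y}\rt|\rt|=O(R/|y|)$, so integrating against $|\eta'(y)|\leq B/y^2$ again yields $O(1/R)$. The chief technical difficulty lies in the principal-value Fubini step that produces the closed form of $J(R)$; once it is in place, the three zone estimates follow routinely from the decay~\eqref{O2}.
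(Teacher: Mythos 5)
Your proof is correct and starts exactly as the paper's does: integrate \eqref{W} over $[-R,R]$, use \eqref{Relimite} for the drift term, and reduce the claim to the vanishing of $J(R)=\int_{-R}^R\dx\eta$. Where you genuinely diverge is in how that vanishing is proved. The paper keeps the representation \eqref{def1}, splits the $y$-integration at $|y|=R$ (subtracting $\eta'(x)$ in the inner region to tame the singularity), and only then integrates in $x$ by Fubini; this produces two terms $T_1,T_2$ involving differences of $\eta$ itself at shifted arguments, estimated via \eqref{O2}, its integrated consequence \eqref{O1}, and the bound $\eta''\in\LL^\infty(\R)$ from Lemma~\ref{LemGui}. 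You instead integrate the Cauchy kernel in $x$ first, arriving at the explicit closed form $J(R)=\frac1\pi\int_\R\eta'(y)\ln\lt|\frac{R-y}{R+y}\rt|\dd y$, after which your three zone estimates require nothing beyond $\eta'\in\LL^1(\R)\cap\LL^\infty(\R)$ and \eqref{O2}; I checked the kernel computation and the three bounds ($O(R^{-1}\ln R)$, $O(R^{-1})$, $O(R^{-1})$) and they are right. Your route is arguably more transparent---the logarithmic kernel makes the decay rate visible at a glance---and it pushes \eqref{O4} into the purely technical role of justifying the principal-value/Fubini exchange (dominating the truncated Hilbert transform of $\eta'$ uniformly in $\varepsilon$ by a bound of the form $2\|\eta''\|_{\LL^\infty(\R)}+\|\eta'\|_{\LL^1(\R)}$ before passing to the limit); that step is the one place where your write-up is a sketch rather than a proof, but the ingredients you invoke do suffice. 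The paper's route avoids computing the kernel explicitly but pays for it with the somewhat heavier estimates \eqref{SurT1}--\eqref{SurT2}. Both arguments are sound and yield the same conclusion.
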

      The above identity is formally obtained by integrating Equation~\eqref{W} over~$\R$; we rigorously prove it.
      Notice that, by Proposition~\ref{PropAsymp} and using a Taylor expansion, $F'(\eta) \notin \LL^1(\R)$.
      
      As mentioned above in the concise form~\eqref{Sd}, we consider the following dynamical system:
      \begin{equation}\label{Wd}
	\lt\{
	  \begin{aligned}
	    &\partial_t u(t,x) + \dx u(t,x)=-F'(u(t,x)) &&\quad\text{for}\quad x\in \R, \\
	    &u(0,x)=u_0(x) &&\quad\text{for}\quad x\in \R,
	  \end{aligned}
	\rt.
      \end{equation}
      for an initial condition~$u_0 \in \LLinf$.
	We say that~$u \in \Espaceu$ is a weak solution to~\eqref{Wd} if, for all~$T>0$, for all~$\phi \in \EspSmooth$, there holds
	\begin{align}
	  \nonumber
	  &\int_0^T\int_{\R} u(t,x) \lt(-\partial_t + \dx \rt) \phi(t,x)\dd x\,\dd t\\
	  &=\int_{\R} u_0(x) \phi(0,x) \dd x
	  - \int_0^T \int_{\R} F'(u(t,x)) \phi(t,x) \dd x\,\dd t.
	  \label{FormFaible}
	\end{align}
      Our third and final result is that~\eqref{W} is the long-time limit of~\eqref{Wd}, for general initial conditions~$u_0$ with suitable behavior at infinity (see Figure \ref{Donnee_Init} for an example).
	We prove the following:
	\begin{theorem}
	  \label{ThCv}
	  Let~$\etal<\etar$, $F \in \CC^3(\R)$ satisfy~\eqref{Bistable} and~$\Delta_0>0$ be such that
	  \begin{align}\label{Puits}
	    &F''>0 \quad\text{on}\quad [\etal-\Delta_0,\etal+\Delta_0] \cup [\etar-\Delta_0,\etar+\Delta_0].
	  \end{align}
	  Assume that~$u_0 \in \LL^\infty(\R)$ takes values in~$[\etal-\Delta_0,\etar+\Delta_0]$ and satisfies
	  \begin{align}\label{Puits2}
	    \limsup_{x \rightarrow -\infty} u_0(x)<\etal+\Delta_0 \et \liminf_{x \rightarrow+\infty} u_0(x)>\etar-\Delta_0.
	  \end{align}
	  Then:
	  \begin{enumerate}
	    \item[(i)]{
	      Equation~\eqref{Wd} has a unique weak solution~$u \in \LL^\infty_{\loc}\lt(\R_+,\LL^\infty(\R)\rt)$. Moreover, for all~$T_0>0$,
	      \begin{align}
		\label{Reg_Eq1}
		u \in \CC\lt((T_0,+\infty),\CC^2(\R)\rt) \cap \CC^1 \lt((T_0,+\infty),\CC(\R)\rt).
	      \end{align}
	    }
	    \item[(ii)]{
	      In addition, for all~$t>0$ and~$x \in \R$, $u(t,x) \in [\etal-\Delta_0,\etar+\Delta_0]$.
	    }
	    \item[(iii)]{
	      Assume that~$(\eta,c)$ is a solution to~\eqref{W} and \eqref{Relimite} such that~$\eta \in \CC^2(\R)$ is an increasing function satisfying~$\eta'>0$ and
	      \begin{align}
		\label{O21}
	          \lim_{|x|\rightarrow +\infty} \eta'(x) = 0.
	      \end{align}
	      Then, there exist constants~$\kappa>0$, $K>0$ and~$\xi$ such that
	      \begin{equation}
	      \label{cv}
		\lt\|u(t,\cdot)-\eta(\cdot-c t+\xi)\rt\|_{\LL^{\infty}(\R)} \leq K \ee^{-\kappa t},
	      \end{equation}
	      for all~$t \in \R_+$.
	      In the above estimate, $\kappa$ only depends on~$F$, $\eta$ and~$\Delta_0$, whereas~$K$ and~$\xi$ also depend on~$u_0$.
	    }
	  \end{enumerate}
	\end{theorem}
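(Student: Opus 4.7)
The plan is to treat (i) and (ii) by a Duhamel fixed-point argument combined with a comparison principle, then to establish (iii) by adapting the Fife--McLeod and Chen \emph{squeezing} technique of~\cite{Fife,XinfuChen,Achleitner} to the borderline case $L=-\dx$.

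For (i), I recast~\eqref{Wd} in mild form using the Poisson semigroup $\ee^{-t\dx}$, whose convolution kernel is $P_t(x)=t/[\pi(t^2+x^2)]$:
$$u(t,\cdot)=P_t*u_0-\int_0^t P_{t-s}*F'(u(s,\cdot))\,\dd s.$$
Since $P_t$ is a contraction on $\LLinf$ and $F'$ is locally Lipschitz on the range of $u_0$, a Picard iteration provides a unique local-in-time mild solution, which is readily checked to satisfy~\eqref{FormFaible}. For (ii), a comparison principle for $\partial_t+\dx+F'(\cdot)$ suffices: the constants $\etar+\Delta_0$ and $\etal-\Delta_0$ are, respectively, a super- and a sub-solution because~\eqref{Puits} together with $F'(\etal)=F'(\etar)=0$ yields $F'(\etar+\Delta_0)>0$ and $F'(\etal-\Delta_0)<0$. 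The resulting uniform $\LLinf$-bound extends the local existence of (i) to all times, and the smoothing properties of $P_t$ (whose derivatives of all orders are $\LL^1$-bounded for $t\geq T_0>0$) upgrade the regularity to~\eqref{Reg_Eq1} by iterating Duhamel.

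The crux is part~(iii). Following~\cite{Fife,XinfuChen}, I construct super- and sub-solutions
$$w_\pm(t,x)=\eta\bigl(x-ct+\xi_\pm(t)\bigr)\pm q(t),\qquad q(t)=q_0\ee^{-\kappa t},\qquad \xi_\pm'(t)=\pm M q(t),$$
for constants $M,\kappa,q_0>0$ to be fixed. Plugging $w_+$ into $\partial_t+\dx+F'(\cdot)$ and using~\eqref{W}, the super-solution inequality reduces to
$$\xi_+'(t)\,\eta'(y)+q'(t)+F'(\eta(y)+q)-F'(\eta(y))\geq 0,\qquad y=x-ct+\xi_+(t).$$
I split $\R$ into two outer regions, where $\eta(y)$ lies in the intervals of~\eqref{Puits} on which $F''>0$---there, $F'(\eta+q)-F'(\eta)\approx F''(\eta)q$ dominates $q'=-\kappa q$ as soon as $\kappa<\min(F''(\etal),F''(\etar))/2$---and a compact middle region, on which~\eqref{O21} combined with $\eta'>0$ gives $\eta'\geq m>0$, so that $\xi_+'\eta'\geq M m q$ absorbs the Lipschitz bound on $F'(\eta+q)-F'(\eta)$ for $M$ large. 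The sub-solution $w_-$ is handled symmetrically.

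The main obstacle is to prove a \emph{waiting-time} lemma: there exist $T_1>0$, shifts $\xi_\pm(0)$, and an amplitude $q_0\leq\Delta_0$ such that
$$w_-(0,\cdot)\leq u(T_1,\cdot)\leq w_+(0,\cdot)\qquad\text{on }\R.$$
From there, the comparison principle applied to $w_\pm$ and $u$ on $[T_1,+\infty)$ yields~\eqref{cv} in the manner of~\cite{XinfuChen,Achleitner}. To initialize the squeezing, I would first use~\eqref{Puits2} and time-dependent constant comparison functions (the ODE $\dot v=-F'(v)$ has $\etal,\etar$ as local attractors, and $\dx$ annihilates constants) to show that $u(T_1,x)$ enters arbitrarily small neighborhoods of $\etar$ for $x\gg 1$ and of $\etal$ for $x\ll -1$; then, the regularity~\eqref{Reg_Eq1} and the monotonicity of $\eta$ allow the shifts $\xi_\pm(0)$ to be chosen on a compact middle window. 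A specific technical difficulty of the case $\alpha=1$ is the slow algebraic decay $\eta(x)-\etar=O(1/|x|)$ given by Proposition~\ref{PropAsymp}, which rules out any exponential tail estimate and demands care when comparing the translates $\eta(\cdot+\xi_\pm(0))$ with $u(T_1,\cdot)$ in the outer regions. Once the squeezing is initialized, the exponential decay of $q(t)$ and the absolute convergence of $\xi_\pm(t)$ produce~\eqref{cv} with $\kappa=\kappa(F,\eta,\Delta_0)$ and with $K$, $\xi$ depending on $u_0$ through $T_1$ and $\xi_\pm(0)$.
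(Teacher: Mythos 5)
Your treatment of (i) and (ii) matches the paper's: mild formulation with the Poisson kernel $K_t(x)=t/[\pi(t^2+x^2)]$, Picard iteration on the Duhamel formula, regularity by iterating the smoothing estimate $\|\frac{\dd}{\dd x}K_t\|_{\LL^1(\R)}\leq Ct^{-1}$, and the constant barriers $\etar+\Delta_0$, $\etal-\Delta_0$ combined with a comparison principle (which the paper proves via Gr\"onwall's lemma applied to $\ee^{Mt}(\ov{u}-\und{u})$). Your sub-/super-solutions for (iii) are essentially those of the paper's Lemma~\ref{Lem22}, and your waiting-time initialization using the ODE $\dot v=-F'(v)$ is a legitimate variant of the paper's Step~1.

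The gap is in how you close part (iii). A \emph{single} application of the comparison principle with $w_\pm(t,x)=\eta(x-ct+\xi_\pm(t))\pm q(t)$ yields, up to exponentially small vertical errors,
\begin{equation*}
\eta\lt(x-ct+\xi_-(\infty)\rt)\;\leq\;u(t,x)\;\leq\;\eta\lt(x-ct+\xi_+(\infty)\rt),
\end{equation*}
with $\xi_\pm(\infty)=\xi_\pm(0)\pm Mq_0/\kappa$. For a general $u_0$ satisfying only~\eqref{Puits2} the initialization forces $\xi_+(0)-\xi_-(0)$ to be of the order of the width of the transition layer of $u_0$ (you cannot take $\xi_+(0)=\xi_-(0)$ with $q_0\leq\Delta_0$ when $\etar-\etal>2\Delta_0$ and the transition of $u_0$ is wide), so the two limiting translates are distinct: ``the exponential decay of $q(t)$ and the absolute convergence of $\xi_\pm(t)$'' only trap $u$ in a neighborhood of size $O\lt(\lt(\xi_+(\infty)-\xi_-(\infty)\rt)\|\eta'\|_{\LLinf}\rt)$ of the family of translates, which is not~\eqref{cv}. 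The missing ingredient is the mechanism that contracts the \emph{horizontal} gap. In the paper this is the strengthened comparison principle (the Corollary following Proposition~\ref{PropCompare}: $\essinf_{x\in[-R,R]}(\ov{u}(1,x)-\und{u}(1,x))\geq\rho(R)\int_0^1(\ov{u}(0,y)-\und{u}(0,y))\,\dd y$, a Harnack-type bound coming from the strict positivity of $K_1$), which guarantees that after one unit of time $u$ has separated from one of the two barriers by a definite amount on a compact window; Lemma~\ref{Lem33} converts this into a strict decrease of the horizontal half-distance, $l\mapsto l+\sigma\delta-\sigma\epsilon_*\min(1,l)/2$, at the price of a controlled increase of $\delta$, and the proof of (iii) iterates this on time steps $t_*$ to obtain geometric decay of both $l_j$ and $\delta_j$ together with a Cauchy sequence of centers $\xi_j\to\xi_\infty$. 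Without this iteration your argument stalls at a non-vanishing error, so you need to add the quantitative separation estimate and the induction.
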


	Theorem~\ref{ThCv} suggests that simulating~\eqref{Wd} is sufficient to obtain in the long time a numerical approximation of the solution~$(\eta,c)$ to~\eqref{W}. 
	In this regard, it is significant that~$c$, which is an unknown of~\eqref{W}, does \textit{not} appear in~\eqref{Wd}.
	This in particular allows for constructing an approximation of the traveling wave velocity, which is unknown before the end of the simulation.
	We refer the reader to our study~\cite{TPB}, where we explain the details of the numerical strategy, and to a forthcoming article~\cite{TPB2} for the multi-dimensional case.
	
	\begin{figure}[ht]
	    \begin{center}
		\includegraphics{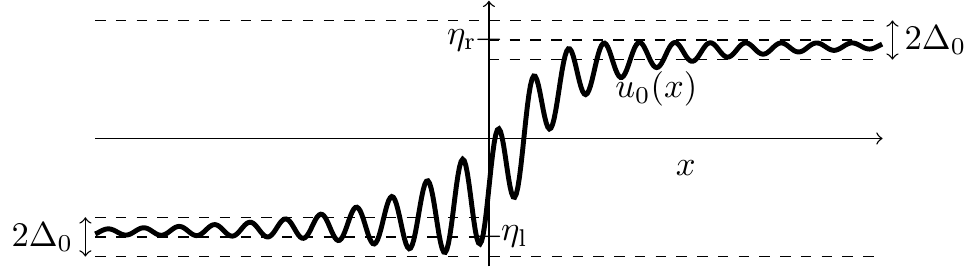}
	    \end{center}
	    \caption{A possible initial condition~$u_0$ in Theorem \ref{ThCv}.}\label{Donnee_Init}
	\end{figure}

      \paragraph{Outline}
	Our contribution is organized as follows.	
	In Section~\ref{SecNot}, we introduce notations and give essential properties of the operator~$\dx$.
	In Section~\ref{Sec_Asymp_c}, we prove Propositions~\ref{PropAsymp} and~\ref{PropIdVitesse}.
	In Section~\ref{Sec_Existe}, we justify the existence and the uniqueness of a weak solution to Equation~\eqref{Wd}, which satisfies \eqref{Reg_Eq1}, establishing thus (i) of Theorem~\ref{ThCv}.
	In Section~\ref{Sec_Converge}, we use Chen's approach for proving (ii) and (iii) of Theorem~\ref{ThCv}.
	The key ingredients are a comparison principle and specific sub-solutions and super-solutions.
	Although we could check the technical assumptions and apply Chen's theorem, we prefer to restrict Chen's proof to our special case for self-consistency and simplicity.
	
\section{Notations and definitions}\label{SecNot}

  \paragraph{Notations}
    We denote by~$\DD$ the space of smooth functions with compact supports in~$\R$ and by~$\Dprime$ the space of distributions over~$\R$. For~$u \in \DD$, we denote the Fourier transform by 
   ~$
      \FF\lt\{u\rt\}(k):=\int_{\R} \ee^{-ikx} u(x) \dd x.
   ~$
    For two functions~$u$ and~$v$, we denote by~$*$ the convolution. 
    Henceforth, the Fourier transform and the convolution are only taken with respect to the \textit{space} variable~$x$ (and never with respect to the \textit{time} variable~$t$). 
    We make use of the principal value of~$\frac{1}{x}$,
%     (see~\cite[Sec. 3.05 p.\ 252]{Gradshteyn}), 
    denoted by~$\pv \lt(\frac{1}{x}\rt)$, which is the distribution defined by 
    \begin{align*}
      \lt<\pv\lt(\frac{1}{x}\rt),u\rt>&=\lim_{\epsilon \rightarrow 0^+} \lt\{\int_{\epsilon}^{+\infty} \frac{u(y)}{y} \dd y + \int_{-\infty}^{-\epsilon} \frac{u(y)}{y} \dd y \rt\},
    \end{align*}
    for~$u \in \DD$. 
 
  \paragraph{Definition and properties of the operator~$\dx$}
    For convenience, we recall some elementary properties of the operator~$\dx$.
%     (see~\cite[p.\ 253]{King}) 
    The Hilbert transform~$\Hilb$ of~$u \in \LL^2(\R)$ is defined by
    \begin{equation}\label{HilbertFourier}
      \Hilb\lt\{u\rt\}:=\mathcal{F}^{-1} \lt\{ -\ii ~ \sgn(k) \mathcal{F}\lt\{u\rt\}(k)\rt\}.
    \end{equation}
    It is immediate that, if~$u \in \LL^2(\R)$, then
	$
	  \Hilb^2\lt\{u\rt\}=-u.
	$
    Next, the operator~$\dx$ is defined as 
    \begin{align}
      \label{def0}
      \dx u(x):=\Hilb\lt\{u'\rt\}(x)=\FF^{-1}\lt\{ |k| \FF\lt\{u\rt\}(k) \rt\}(x),
    \end{align}
    for~$u \in \DD$.
    As~$\FF\lt\{\pv\lt(1/x\rt)\rt\}(k)=-\im\pi \sgn(k)$, the operator~$\dx$ can be rewritten as 
    \begin{align}
      \dx u(x)
      \label{def1}
      =&- \frac{1}{\pi} \int_{0}^{+\infty} \frac{u'(x+y)-u'(x-y)}{y} \dd y\\
      \label{def2}
      =& -\frac{1}{\pi} \int_{0}^{+\infty} \frac{u(x+y)-2u(x)+u(x-y)}{y^2} \dd y,
    \end{align}
    the last expression being obtained from the previous one by integrating by parts.
    We see from~\eqref{def0} that the operator~$\dx$ is symmetric and positive, like the Laplacian. But, unlike the Laplacian, it is clear from~\eqref{def2} that~$\dx u(x)$ does not only depend on~$u$ in the neighborhood of~$x$ but also on each value~$u(y)$, for~$y \in \R$; put differently, $\dx$ is \textit{non-local}.
    
    A straightforward computation yields that~$\dx \phi \in \LL^1(\R)$ whenever~$\phi \in \DD$. Hence, one can extend~$\dx$ over~$\LL^\infty(\R)$ by duality, defining~$\dx u$ as the following distribution:
      \begin{align}\label{DefDxLinfty}
        \dx u : \phi \in \DD \mapsto \int_{\R} u(y) \dx \phi(y) \dd y.
      \end{align}
    
    When~$u$ is sufficiently regular, explicit expressions of~$\dx u$ are available.
    Namely, if ~$u\in\LL^\infty(\R) \cap \CC^2(\R)$, then Expression~\eqref{def2} is valid. In particular, $\dx u \in \CC(\R) \cap \LL^\infty(\R)$. 
    The proof can be done by density of~$\DD$ in~$\CC^2_{\loc}(\R)$, using the fact that~\eqref{def2} is true for smooth functions.
    If we assume furthermore that~$u' \in \LL^1(\R)$, then Expression~\eqref{def1} is also valid; this is deduced from~\eqref{def2} by integration by parts.

  \section{Asymptotes and an identity about velocity}\label{Sec_Asymp_c}

      The proof of Proposition~\ref{PropAsymp} relies on the asymptotic behavior of Cauchy integrals (see~\cite[p.\ 267]{Musk}) and involves the following technical lemma:
      \begin{lemma}
	\label{LemGui}
	Under the hypotheses of Proposition~\ref{PropAsymp}, there holds
	\begin{align}
	  \label{O4}		
	    \eta'' \in \LL^\infty(\R).
	\end{align}
      \end{lemma}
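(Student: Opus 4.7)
The plan is to extract $\eta''$ from a once-differentiated version of~\eqref{W} by inverting the resulting linear operator using the Hilbert transform $\Hilb$. Set $h(x):=F''(\eta(x))\eta'(x)$. The bound~\eqref{O2}, combined with the continuity of $\eta'$ on $[-1,1]$, gives $\eta'\in\LL^1(\R)\cap\LLinf$; since $F''$ is bounded on the range of $\eta$, this yields $h\in\LL^p(\R)$ for all $p\in[1,\infty]$, together with $h\in\CC^1(\R)$.

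Differentiating~\eqref{W} distributionally (noting that $\dx$ and $\partial_x$ commute, being both Fourier multipliers), I obtain $-\dx\eta'+c\eta''=h$. The identity $\dx\eta'=\Hilb\{\eta''\}$ (both operators have Fourier symbol $\ii k|k|$) recasts this as $(c\Id-\Hilb)\eta''=h$. In Fourier variables, $\FF\{\eta''\}(k)=\FF\{h\}(k)/(c+\ii\,\sgn(k))$; the multiplier has modulus $1/\sqrt{c^2+1}$ and $h\in\LL^2(\R)$, so $\eta''\in\LL^2(\R)$. Using $\Hilb^2=-\Id$ on $\LL^2(\R)$, I then multiply out $(c\Id+\Hilb)(c\Id-\Hilb)=(c^2+1)\Id$ to obtain the explicit representation
\begin{equation*}
(c^2+1)\,\eta''=c\,h+\Hilb\{h\}.
\end{equation*}

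It remains to prove $\Hilb\{h\}\in\LLinf$, which does not follow from $h\in\LLinf$ alone since $\Hilb$ is unbounded on $\LLinf$. I proceed by a short bootstrap: by the Riesz theorem, $\Hilb$ is bounded on $\LL^p(\R)$ for $p\in(1,\infty)$, so $\eta''\in\LL^p(\R)$ and hence $\eta'\in\WW^{1,p}(\R)$. The one-dimensional Sobolev embedding $\WW^{1,p}(\R)\hookrightarrow\CC^{0,1-1/p}(\R)$ (for $p>1$) yields $\eta'\in\CC^{0,\alpha}(\R)$ globally for some $\alpha\in(0,1)$. Since $F''$ is $\CC^1$ on the bounded range of $\eta$, the product $h=F''(\eta)\eta'$ is also globally $\CC^{0,\alpha}$. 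Using the splitting
\begin{equation*}
\Hilb\{h\}(x)=\frac{1}{\pi}\int_{|y-x|\geq 1}\frac{h(y)}{x-y}\,\dd y+\frac{1}{\pi}\int_{|y-x|<1}\frac{h(y)-h(x)}{x-y}\,\dd y,
\end{equation*}
the first integral is dominated by $\|h\|_{\LL^1}/\pi$ and, thanks to the global Hölder regularity of $h$, the second by $\tfrac{2}{\pi\alpha}[h]_{\CC^{0,\alpha}}$, both uniformly in $x$. Combined with the formula above, this establishes $\eta''\in\LLinf$.

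The main obstacle is this bootstrap: because $\Hilb$ is not continuous on $\LLinf$, an $\LLinf$-estimate on $\Hilb\{h\}$ requires strictly more than $h\in\LLinf$; we obtain the necessary global Hölder regularity of $h$ by first producing $\LL^p$-estimates on $\eta''$ via the Fourier-multiplier representation and then invoking Sobolev embedding.
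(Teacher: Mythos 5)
Your proof is correct, and after a common first step it follows a genuinely different route from the paper's. Both arguments begin identically: differentiate \eqref{W} once, observe that the right-hand side $h=F''(\eta)\eta'$ lies in $\LL^2(\R)$ thanks to \eqref{O2}, and invert the Fourier multiplier $-|k|+c\ii k$ (the relevant quotient having modulus $1/\sqrt{1+c^2}$) to obtain $\eta''\in\LL^2(\R)$. The paper then differentiates the equation a \emph{second} time: the new right-hand side $F''(\eta)\eta''+F'''(\eta)(\eta')^2$ is again in $\LL^2(\R)$, so the same multiplier argument gives $\eta'''\in\LL^2(\R)$, and the Sobolev injection $\HH^1(\R)\hookrightarrow\LL^\infty(\R)$ concludes. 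You instead stop at first order, write the explicit resolvent identity $(c^2+1)\eta''=ch+\Hilb\{h\}$, bootstrap $\eta''\in\LL^p(\R)$ (Riesz) into global H\"older continuity of $\eta'$, hence of $h$ (Morrey), and bound $\Hilb\{h\}$ pointwise by the standard splitting of the singular integral into an $\LL^1$ far part and a H\"older-controlled near part. The paper's route is shorter and stays entirely within $\LL^2$-based Sobolev spaces, but consumes one more derivative of $F$ (it uses $F'''$, available since $F\in\CC^3(\R)$); yours avoids the second differentiation and needs only local Lipschitz regularity of $F''$, at the price of invoking the Riesz theorem and a direct estimate on the Hilbert transform of an $\LL^1\cap\CC^{0,\alpha}$ function. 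Both are complete proofs of \eqref{O4}.
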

      
      \begin{remark}
	Remark that it is also possible to establish by technical arguments that there exists a constant~$C>0$ such that, for all~$|x|>1$,
	\begin{align}
	  \label{O5}		
	  &\lt|\eta''(x)\rt|\leq C |x|^{-2} \lt(1+\ln(|x|)\rt).
	\end{align}
	 (We refer the reader to \cite{PhD} for the proof of \eqref{O5}).
	 However, \eqref{O4} is sufficient to prove Proposition~\ref{PropAsymp}.
	Yet, if~$F$ is sinusoidal, one can derive analytical solutions~$\eta$ to~\eqref{W}, as is shown in~\cite{Rosakis}, which are of the form
	\begin{align*}
	  \eta(x) = \etal + \frac{\etar-\etal}{\pi} \lt( \frac{\pi}{2} + \arctan\lt( ax\rt) \rt),
	\end{align*}
	for~$a >0$. Whence
	\begin{align*}
	  \eta''(x)= -\frac{\etar-\etal}{\pi} \frac{2a^3x}{\lt(a^2x^2+1\rt)^2} \underset{x \rightarrow +\infty}{\sim} -\frac{2(\etar-\etal)}{\pi a x^3}.
	\end{align*}
	Thus~\eqref{O5} is probably not optimal.
      \end{remark}

      We postpone the proof of Lemma~\ref{LemGui} until the end of the proof of Proposition~\ref{PropAsymp} and temporarily admit Lemma~\ref{LemGui}.
      \begin{proof}[Proof of Proposition~\ref{PropAsymp}]
	We focus on the case~$x \rightarrow + \infty$. Provided that
	\begin{align}
	  \label{Pretention}
	  x  \dx \eta(x)  \underset{x\rightarrow +\infty}{\longrightarrow}  \frac{1}{\pi} \int_{-\infty}^{+\infty} \eta'(y) \dd y= \frac{1}{\pi} \lt(\etar-\etal\rt),
	\end{align}
	then, using~\eqref{W}, \eqref{Relimite}, \eqref{Bistable} and ~\eqref{O2}, we get by Taylor expansion
	\begin{align*}
	  F''(\etar) \lt( \eta(x)-\etar \rt)
	  \underset{x \rightarrow + \infty}{\sim}
	  -\frac{1}{\pi} x^{-1} \lt( \etar-\etal\rt),
	\end{align*}
	which is~\eqref{Asympt1}. 
	
	Let us now prove~\eqref{Pretention}. By assumption, $\eta' \in \CC^1(\R)$, and by~\eqref{O2}, we have~$\eta' \in \LL^{1}(\R)$. As a consequence, there holds
	\begin{equation*}
	    x\dx \eta(x)=\frac{x}{\pi} \lim_{\epsilon \rightarrow 0^+} \lt( \int_{-\infty}^{x-\epsilon} + \int_{x+\epsilon}^{+\infty} \rt) \frac{\eta'(y)}{x-y} \dd y.
	\end{equation*}
	Let~$R>0$ and~$x>2R$. We split the integral into three parts 
	\begin{align}
	\nonumber
	 \pi x\dx \eta(x)=&
	  \int_{-\infty}^R \frac{\eta'(y)}{1-y/x} \dd y
	  + x\lt(\int_{R}^{x-R} + \int_{x+R}^{+\infty} \rt) \frac{\eta'(y)}{x-y} \dd y\\
	  &{}+ x\lim_{\epsilon \rightarrow 0^+}\lt(\int_{x-R}^{x-\epsilon}+\int_{x+\epsilon}^{x+R}\rt)\frac{\eta'(y)}{x-y} \dd y.
	  \label{3parts}
	\end{align}
	The first right-hand term in \eqref{3parts} is dealt with the dominated convergence theorem, the second one avoids the singularity of~$|x-y|^{-1}$ and is bounded thanks to~\eqref{O2}, and the third one is on the singularity of~$|x-y|^{-1}$ and is controlled thanks to \eqref{O4} and \eqref{O2}.
	
	As~$\eta' \in\LL^{1}(\R)$ and since (recall that~$x>2R$)
	\begin{align*}
        \lt|\frac{\eta'(y)}{1-y/x}\rt|  \leq 2 \lt|\eta'(y)\rt|
        \quad\text{if}\quad
        y<R,
        \quad\text{and}\quad
        \frac{\eta'(y)}{1-y/x} \underset{x \rightarrow + \infty}{\longrightarrow} \eta'(y),
	\end{align*}
	then, by the dominated convergence theorem 
	\begin{align}\label{first_part}
        \int_{-\infty}^{R} \frac{\eta'(y)}{1-y/x} \dd y \underset{x \rightarrow + \infty}{\longrightarrow} \int_{-\infty}^R \eta'(y) \dd y.
	\end{align}
	Next, we split the second integral of~\eqref{3parts} into three parts. Invoking~\eqref{O2}, we deduce that, as~$|x|>2R$,
	\begin{align}
	  \nonumber
	  &\lt|\lt( \int_{R}^{x-R} + \int_{x+R}^{+\infty} \rt) \frac{\eta'(y)}{x-y} \dd y\rt|
	  \\
	  \nonumber
	  &\leq C x^{-1} \int_{R}^{x/2} \lt|\eta'(y)\rt|\dd y 
	  + C R^{-1}
	  \lt(\int_{x/2}^{x-R}+\int_{x+R}^{+\infty}\rt)\lt|\eta'(y)\rt|\dd y
	  \\
	  &\leq C x^{-1} \int_{R}^{x/2} \frac{\dd y}{y^2} + C R^{-1}
	  \lt(\int_{x/2}^{x-R}+\int_{x+R}^{+\infty}\rt)\frac{\dd y}{y^2}
	  \nonumber
	  \\
	  &\leq CR^{-1} x^{-1}.
	  \label{second_part}
	\end{align}
	Last, we split the last integral of \eqref{3parts} into two parts, namely:
	\begin{align}
	  \nonumber
	  \lt|\lt(\int_{x-R}^{x-\epsilon}+\int_{x+\epsilon}^{x+R}\rt)\frac{\eta'(y)}{x-y} \dd y \rt|
	  =&\lt|\lt(\int_\epsilon^{x^{-2}}+\int_{x^{-2}}^R\rt) \frac{\eta'(x-z)-\eta'(x+z)}{z} \dd z\rt|.
	\end{align}
	The first part of the right-hand side of the above equation is dealt with by using \eqref{O4}, and the second one by using \eqref{O2}.
	Whence, as~$x>2R$ and for~$x^{-2} \geq \epsilon$,
	\begin{align*}
	  \lt|\lt(\int_{x-R}^{x-\epsilon}+\int_{x+\epsilon}^{x+R}\rt)\frac{\eta'(y)}{x-y} \dd y \rt|
	  &\leq C x^{-2} \int_{x^{-2}}^{R} z^{-1}\dd z + C \int_{\epsilon}^{x^{-2}} \dd z
	  \\
	  &\leq C x^{-2}\lt(\ln(Rx^2) +1 \rt).
	\end{align*}
	Therefore
	\begin{align}
	     \lt|x\lim_{\epsilon \rightarrow 0^+}\lt(\int_{x-R}^{x-\epsilon}+\int_{x+\epsilon}^{x+R}\rt)\frac{\eta'(y)}{x-y} \dd y  \rt|\leq C x^{-1}\lt(\ln(Rx^2) +1 \rt).
	  \label{third_part}
	\end{align}

	Convergence~\eqref{first_part} and Estimates~\eqref{second_part} and~\eqref{third_part} finally yield 
	\begin{align*}
	&\limsup_{x \rightarrow + \infty} 
	\lt|\pi x  \dx \eta(x)  - \int_{-\infty}^{+\infty} \eta'(y) \dd y\rt|
	\leq \lt| \int_{R}^{+\infty} \eta'(y)\dd y \rt|+ CR^{-1},
	\end{align*}
	which, thanks to~\eqref{O2}, implies~\eqref{Pretention} upon letting~$R \rightarrow + \infty$.
      \end{proof}

      We then proceed with the:
      \begin{proof}[Proof of Lemma~\ref{LemGui}]
      We first remark that if~$g \in\LL^{2}(\R)$ and if
	\begin{equation}
	  \label{Asympt_Eq1}
	  -\dx h(x) + c h'(x)=g(x),
	\end{equation}
	then~$h' \in\LL^{2}(\R)$. Indeed, the Fourier transform turns~\eqref{Asympt_Eq1} into
	\begin{equation*}
	  (-|k|+c\ii k)\mathcal{F}\{h\}(k) = \mathcal{F}\{g\}(k).
	\end{equation*}
	Therefore, $k\mathcal{F}\{h\}(k) \in\LL^{2}(\R)$, whence~$h' \in\LL^{2}(\R)$. We use this result to prove that~$\eta'' \in \LL^{\infty}(\R)$. 
	
	Upon differentiating~\eqref{W}, we obtain
	\begin{equation}\label{LemEq1}
	    -\dx \eta'(x)+c \eta''(x)= F''(\eta(x))\eta'(x).
	\end{equation}
	As~$\eta \in\LL^{\infty}(\R)$, $F\in \CC^{3}(\R)$ and~$\eta' \in\LL^{2}(\R)$ (thanks to~\eqref{O2}),  then the right-hand side of~\eqref{LemEq1} is in~$\LL^{2}(\R)$. Therefore~$\eta'' \in\LL^{2}(\R)$. Differentiating~\eqref{LemEq1} yields
	\begin{equation}\label{LemEq2}
	    -\dx \eta''(x)+c\eta'''(x) = F''(\eta(x))\eta''(x)+ F'''(\eta(x)) \lt(\eta'(x)\rt)^2.
	\end{equation}
	As~$\eta \in \LL^{\infty}(\R)$, $F \in \CC^{3}(\R)$, $\eta'' \in \LL^{2}(\R)$, and, thanks to~\eqref{O2}, $\lt(\eta'\rt)^2 \in \LL^{2}(\R)$, then the right-hand side of~\eqref{LemEq2} is in~$\LL^{2}(\R)$. Therefore~$\eta'''\in\LL^{2}(\R)$. As a consequence, since~$\eta'' \in \LL^{2}(\R)$, we deduce by Sobolev injection that~$\eta'' \in \LL^\infty(\R)$,
	whence \eqref{O4}.	
      \end{proof}

      We now focus on Proposition~\ref{PropIdVitesse}.
      Both Identities~\eqref{Idc1} and~\eqref{Idc2} are formally obtained by testing~\eqref{W} against a certain function~$g$, namely~$g=\eta'$ for~\eqref{Idc1}, and~$g=1$ for~\eqref{Idc2}. We justify below this formal integration.

      \begin{proof}[Proof of Proposition~\ref{PropIdVitesse}]
	Let~$R>2$. We integrate~\eqref{W} over~$[-R,R]$:
	\begin{equation}\label{etoile}
	  -\int_{-R}^{R} \dx \eta(x)\dd x + c \lt( \eta(R)-\eta(-R)\rt)=\int_{-R}^R F'(\eta(x))\dd x.
	\end{equation}
	Thus, Identity~\eqref{Idc2} stems from~\eqref{etoile}, provided that
        \begin{equation}\label{Pretention2}
          \lim_{R \rightarrow + \infty}\int_{-R}^R \dx \eta(x)\dd x =0.
        \end{equation}

        We prove~\eqref{Pretention2} using~\eqref{O2} and~\eqref{O4}.
        As~$\eta' \in\LL^{1}(\R) \cap \CC^{1}(\R)$, 
        there holds
        \begin{align}
	  \nonumber
          \dx \eta(x)
          =& \frac{1}{\pi}\int_0^{+\infty} \frac{\eta'(x-y)-\eta'(x+y)}{y}\dd y
          \\
	  =& \frac{1}{\pi} \int_{|y|<R}  \frac{\lt(\eta'(x-y) - \eta'(x) \rt)}{y}\dd y+ \frac{1}{\pi} \int_{|y|>R} \frac{\eta'(x-y)}{y}\dd y.
	  \label{eq:Star}
        \end{align}
        Remark that
        \begin{align*}
	    \int_{|y|>R} \lt|\frac{\eta'(x-y)}{y}\rt|\dd y \leq 2 R^{-1}\lt\|\eta'\rt\|_{\LL^{1}(\R)},
        \end{align*}
        and that, using~\eqref{O4},
        \begin{align*}
	    \int_{|y|<R} \lt| \frac{\lt(\eta'(x-y) - \eta'(x) \rt)}{y} \rt|\dd y
	    \leq & 2 R \lt\|\eta'' \rt\|_{\LL^{\infty}(\R)} \leq CR.
        \end{align*}
        Therefore, integrating~\eqref{eq:Star} thanks to Fubini's theorem yields
        \begin{align}
	    \int_{-R}^R \dx \eta(x)\dd x
	    =&\frac{1}{\pi}\int_{|y|<R} \frac{\lt(\eta(R-y)-\eta(R) \rt)-\lt(\eta(-R-y)-\eta(-R) \rt)}{y}\dd y
	    \nonumber
	    \\
	    \nonumber
	    &+\frac{1}{\pi}\int_{|y|>R} \frac{\eta(R-y)-\eta(-R-y)}{y}\dd y\\
	    \label{DerdesDers}
	    =&:T_1+T_2.
        \end{align}
	First, we bound~$T_1$. If~$|y|<R/2$, thanks to~\eqref{O2}, we obtain
	\begin{align*}
	  &\lt|\eta(R-y)-\eta(R)\rt|\leq C |y|R^{-2}
	  \quad\text{and}\quad
	  \lt|\eta(-R-y)-\eta(-R)\rt|\leq C |y|R^{-2}.
	\end{align*}
	As a consequence,
	\begin{align}
	\label{SurT11}
	  &\int_{|y|<R/2} \lt|\frac{\lt(\eta(R-y)-\eta(R) \rt)-\lt(\eta(-R-y)-\eta(-R) \rt)}{y}\rt|\dd y \leq C R^{-1}.
	\end{align}
	Note that, as underlined in~\cite{Gui}, a consequence of \eqref{O2} is that
	\begin{align}
	    \lt\{
	    \label{O1}
	    \begin{aligned}
		&A|x|^{-1} \leq \etar-\eta(x) \leq B |x|^{-1}, 	\si x>1,\\
		&A|x|^{-1} \leq \eta(x)-\etal \leq B |x|^{-1}, 	\si x<-1.
	    \end{aligned}
	    \rt.
	\end{align}
	Therefore, if~$|y|<R$
	\begin{align*}
	  &\lt|\eta(R-y)-\eta(R)\rt| \leq \frac{C}{R-|y|+1}
	  \quad\text{and}\quad
	  \lt|\eta(-R-y)-\eta(-R)\rt| \leq \frac{C}{R-|y|+1}.
	\end{align*}
	Whence
	\begin{align}
	  \nonumber
	  &\int_{R/2<|y|<R} \lt|\frac{\lt(\eta(R-y)-\eta(R) \rt)-\lt(\eta(-R-y)-\eta(-R) \rt)}{y}\rt|\dd y
	  \\
	  \label{SurT12}
	  &\leq\frac{C}{R} \int_{R/2}^R \frac{\dd y}{R+1-y}
	  \leq C R^{-1}\ln(R).
	\end{align}
	We deduce from~\eqref{SurT11} and~\eqref{SurT12} that
	\begin{equation}
	\label{SurT1}
	  \lt|T_1(R)\rt|\leq C R^{-1} (1+\ln (R)).
	\end{equation}
	Thanks to~\eqref{O2} and since~$\eta \in\LL^{\infty}(\R)$, if~$|y|>R$, we have
	\begin{equation*}
	  \lt|\eta(R-y)-\eta(-R-y)\rt|\leq C \min\lt\{R \lt(|y|-R\rt)^{-2}, 1 \rt\}.
	\end{equation*}
	Whence, splitting~$T_2$ into two parts,
	\begin{align}
	  \nonumber
	  \lt| T_2(R) \rt|
	  \leq & C \lt(\int_{R}^{R+\sqrt{R}}\frac{\dd y}{y}+\int_{R+\sqrt{R}}^{+\infty} \frac{R}{y(y-R)^2}\dd y\rt)
	  \\
	  \label{SurT2}
	  \leq & C \frac{\sqrt{R}}{R} + C\int_{\sqrt{R}}^{+\infty}\frac{\dd z}{z^2}
	  \leq  C R^{-1/2}.
	\end{align}
	Bearing~\eqref{DerdesDers} in mind, we observe that~\eqref{SurT1} and~\eqref{SurT2} imply~\eqref{Pretention2}.
      \end{proof}

  \section{Existence, uniqueness and regularity of the solution to the evolution equation~\eqref{Wd}}\label{Sec_Existe}

    We now justify the existence, the uniqueness and the regularity of a weak solution~$u$ to~\eqref{Wd}. 
    We proceed in the classical way; as the methods as well as the type of results are well-known, we only give a few hints of proofs. We refer the interested reader to~\cite{PhD} for some technical details and extra materials about the proofs, and to~\cite{Cazenave} for a reference on evolution equations involving m-dissipative operators.
    
        Using the Fourier transform, the solution to the homogeneous linear equation
	\begin{align}\label{EqHom}
	    &\partial_t u(t,x)+\dx u(t,x)=0
	    \pour x \in \R, && \et u(0,x)=u_0(x)
	\end{align}
         is given by~$u(t,x)=\lt\{K_t*u_0\rt\}(x)$, where the kernel~$K_t$ is defined by
        \begin{align}\label{DefKt}
          K_t(x)=
	    \begin{aligned}
	      &\frac{t}{\pi (t^2+x^2)} \quad \si t>0 
	      \et
	      K_0=\delta_0,\\
	    \end{aligned}
        \end{align}
        the Fourier transform of which is~$\ee^{-|k|t}$.
	Before getting to the inhomogeneous linear equation, we underline some interesting properties of the kernel~$K_t$. 
	First, for all~$t \geq 0$, $K_t$ is a probability measure.
	Moreover, for all~$t>0$, $K_t$ is a smooth function. In particular, the space derivative of~$K_t$ satisfies
      \begin{align}
	\label{DerivKt}
	\lt\|\frac{\dd}{\dd x} K_t\rt\|_{\LL^1(\R)} \leq C t^{-1},
      \end{align}
      where~$C$ is a universal constant. In all these aspects, $K_t$ is similar to the Gaussian kernel~$\mathcal{K}_t(x)=\ee^{-\frac{x^2}{2t^2}}/(t\sqrt{2\pi})$.

      The semi-group generated by~$K_t$ allows for solving the inhomogeneous equation
      \begin{align}
        \label{EqInhom}
        \lt\{
	  \begin{aligned}
	    &\partial_t u(t,x)+\dx u(t,x)=g(t,x) 	    &&\quad\text{for}\quad x \in \R,
	    \\
	    &u(0,x)=u_0(x) 	    &&\quad\text{for}\quad x \in \R.
	  \end{aligned}
        \rt.
      \end{align}
	Indeed, let~$T>0$, $u_0 \in \LL^\infty(\R)$ and~$g\in \LL^\infty([0,T] \times \R)$.
	Then there exists a unique weak solution~$u \in \LL^\infty([0,T] \times \R)$ to~\eqref{EqInhom} in the sense that, for all~$\phi \in \EspSmooth$, the following identity holds:
        \begin{align}
	  \nonumber
	  &\int_0^T \int_{\R} u(t,x) \lt(-\partial_t + \dx \rt) \phi(t,x) \dd x \dd t - \int_{\R} u_0(x) \phi(0,x) \dd x\\
	  &= \int_0^T \int_{\R} g(t,x) \phi(t,x) \dd x \dd t.
	  \label{FormFaibleGen}
	\end{align}
        This solution can be written thanks to the Duhamel formula as
        \begin{align}\label{EqInhomExp}
          u(t,x)= K_t * u_0(x) + \lt\{ \int_0^t K_{t-s}*g(s,\cdot) \dd s \rt\}(x),
        \end{align}
	with the convention that~$K_0 * u_0=u_0$, even if~$u_0$ is not regular. The existence of a solution~$u$ to~\eqref{EqInhom} is a consequence of the fact that~\eqref{EqInhomExp} is well-defined;  its uniqueness is showed using the adjoint problem of~\eqref{EqInhom}.

	We now turn to the semi-linear equation~\eqref{Wd}. Let~$F \in \WW^{2,\infty}(\R)$ and~$u_0 \in \LL^\infty(\R)$. Then there exists a unique weak solution~$u \in \LL_\loc^{\infty}\lt(\R_+,\LL^\infty(\R)\rt)$ to~\eqref{Wd}. Moreover, $u$ can be expressed as
        \begin{align}
          \label{Explicit_u}
          u(t,x)=\lt\{K_t * u_0\rt\} (x) - \lt\{ \int_0^t K_{t-s} * F'(u(s,\cdot)) \dd s \rt\}(x).
        \end{align}
      The proof is done by a classical fixed-point argument  on~\eqref{Explicit_u} (see for example~\cite[Sec.\ 4.3 p.\ 56]{Cazenave}).
      
      Finally, we justify that the evolution equation~\eqref{Wd} has a regularizing effect; in other words, the weak solution to~\eqref{Wd} becomes instantly a classical solution.
        Assume indeed that~$F \in \CC^3(\R) \cap \WW^{3,\infty}(\R)$ and~$u_0 \in \LL^\infty(\R)$, and let~$u \in \LL^{\infty}_{\loc}\lt(\R_+,\LLinf\rt)$ be the weak solution to~\eqref{Wd}. Then, for all~$T_0>0$, we have \eqref{Reg_Eq1}.
        Therefore, for all~$t>0$, there holds
        \begin{align}
	  \label{Reg_Eq2}
          \partial_t u(t,x)+\dx u(t,x)=-F'(u(t,x)),
        \end{align}
        in the strong sense. Finally
	\begin{align}
	  \label{Reg_Eq3}
	  u \in \CC\lt(\lt[0,+\infty\rt[,\text{weak-$*$-}\LL^\infty(\R) \rt).
	\end{align}
	The proof of~\eqref{Reg_Eq1} relies on an iterative argument based on~\eqref{Explicit_u}, using the fact that, for all~$t>0$, $K_t$ is a smooth probability measure that satisfies~\eqref{DerivKt}.
	Last, a straightforward adaptation of the proof of~\cite[Ex.\ 4.24 p.\ 126]{Brezis} yields~\eqref{Reg_Eq3}.

        Note that, similarly to~\cite[Th.\ 2.9.]{Achleitner}, $t\mapsto u(t,\cdot)$ may not be continuous at~$0$ in~$\LL^\infty(\R)$. If indeed~$u_0$ is discontinuous, then~$u(t,\cdot)$ cannot tend to~$u_0$ in~$\LL^\infty(\R)$ when~$t\rightarrow 0$ because, invoking~\eqref{Reg_Eq1}, ~$u(t,\cdot)$ is a continuous function for all~$t>0$.

 \section{Convergence of the evolution equation~\eqref{Wd} to the Weertman equation~\eqref{W}}\label{Sec_Converge}
    
    In this section, we prove (ii) and (iii) of Theorem~\ref{ThCv}. 
    The proof can be summarized in two steps: first, we show that~\eqref{Wd} satisfies a comparison principle, then we use Chen's method of squeezing, establishing respectively (ii) and (iii) of Theorem~\ref{ThCv}. 
    For the sake of self-consistency, simplicity and conciseness (Chen's theory being quite general), we prefer to restrict the whole proof of ~\cite[Th.\ 3.1]{XinfuChen} to our specific case rather than to check that the hypotheses of Chen's theory are satisfied (precisely Hypotheses (A1), (A2), (A3), (B1), (B2) and (B3) of~\cite{XinfuChen}, which are indeed satisfied in our case). 
    
      We henceforth assume that~$F \in \CC^3(\R) \cap \WW^{3,\infty}(\R)$ satisfies~\eqref{Bistable} and~\eqref{Puits}. 
      We introduce the non-linear operator~$\AAA[u]:=-\dx u -F'(u)$, of which we now discuss some immediate properties. 
      By the results of Section~\ref{Sec_Existe}, $\AAA$ generates a semi-group on the Banach space~$\LLinf$.~$\AAA$ is translation invariant; namely, for all~$h \in \R$, and for any function~$u(x)$, there holds
      \begin{align}\label{InvarTrans}
	\AAA[u(h+\cdot)](x)=\AAA[u](x+h), \qquad \forall x \in \R.
      \end{align}
      Moreover, $\AAA$ maps constant functions to constant functions; namely
      \begin{align*}
	\AAA[\alpha\cdot1]=-F'(\alpha)\cdot1,
      \end{align*}
      for all~$\alpha \in \R$, where~$1$ above denotes the function identically equal to~$1$. 
      
      The operator~$\AAA$ satisfies the following comparison principle:
      \begin{proposition}\label{PropCompare}
	Let~$F \in \WW^{2,\infty}(\R)$. Let~$\ov{u}$ and~$\und{u} \in \Espaceu$ be such that
	\begin{align}
	  \label{HypoComp}
	  \lt\{
	    \begin{aligned}
	      &\partial_t \und{u}(t,x) - \mathcal{A}[\und{u}(t,\cdot)](x)=\und{g}(t,x) \leq 0,\\
	      &\partial_t \ov{u}(t,x)-\mathcal{A}[\ov{u}(t,\cdot)](x)=\ov{g}(t,x) \geq 0,\\
	    \end{aligned}
	  \rt.
	\end{align}
	 where~$\und{g}$ and~$\ov{g} \in \Espaceu$, and~$\und{u}(0,\cdot) \leq \ov{u}(0,\cdot)$ with~$\und{u}(0,\cdot) \neq \ov{u}(0,\cdot)$ on a non-negligible set. Then, for almost every~$t>0$, $x \in \R$, there holds
	\begin{align}\label{Comp1}
	  \und{u}(t,x)< \ov{u}(t,x).
	\end{align}
      \end{proposition}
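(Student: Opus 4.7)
The plan is to reduce Proposition~\ref{PropCompare} to a positivity statement for a linear equation on $w := \ov{u} - \und{u}$. Subtracting the two identities in \eqref{HypoComp} and using $F \in \WW^{2,\infty}(\R)$ to write $F'(\ov{u}) - F'(\und{u}) = c_0 w$ with $c_0(t,x) := \int_0^1 F''(\und{u} + s w) \dd s$ bounded by $\kappa := \|F''\|_{\LL^\infty(\R)}$, I would arrive at
\begin{align*}
  \partial_t w + \dx w + c_0 w = \ov{g} - \und{g} \geq 0, \qquad w(0,\cdot) \geq 0.
\end{align*}
The coefficient $c_0$ may change sign, so the next step is to absorb it by setting $\tilde w := \ee^{\kappa t} w$, which satisfies
\begin{align*}
  \partial_t \tilde w + \dx \tilde w = \mu \tilde w + H,
\end{align*}
with $\mu := \kappa - c_0 \geq 0$ bounded, $H := \ee^{\kappa t}(\ov{g} - \und{g}) \geq 0$, and initial data $\tilde w(0,\cdot) = w(0,\cdot)$ that remains non-trivial on a non-negligible set by hypothesis.

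The core of the argument is to prove $\tilde w \geq 0$ via Duhamel and a Neumann-series expansion. By the linear theory of Section~\ref{Sec_Existe} applied with source $\mu \tilde w + H \in \Espaceu$, $\tilde w$ must satisfy
\begin{align*}
  \tilde w(t) = K_t * w(0) + \int_0^t K_{t-s} * \lt[\mu(s,\cdot) \tilde w(s,\cdot) + H(s,\cdot)\rt] \dd s.
\end{align*}
I would then split the right-hand side as $\tilde w_0 + L[\tilde w]$, where $\tilde w_0(t,x) := K_t * w(0)(x) + \int_0^t K_{t-s} * H(s,\cdot)(x) \dd s$ and $L[v](t) := \int_0^t K_{t-s} * \mu(s,\cdot) v(s,\cdot) \dd s$. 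Since $K_t$ is a probability measure, the operator norm of $L$ on $\LL^\infty([0,T] \times \R)$ is bounded by $\kappa T$, so for $T < \kappa^{-1}$ the Neumann series $\tilde w = \sum_{n \geq 0} L^n[\tilde w_0]$ converges in that space. Each summand is pointwise non-negative since $K_t$, $\mu$, $H$, and $w(0,\cdot)$ are all non-negative; hence $\tilde w \geq 0$ on $[0,T] \times \R$, and iterating in time steps of fixed length extends this to all of $\R_+ \times \R$.

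The strict inequality \eqref{Comp1} will then follow from the strict positivity of the kernel. Because $K_t(x) = t/(\pi(t^2+x^2)) > 0$ for every $x \in \R$ and $t > 0$, and $w(0,\cdot) \geq 0$ is strictly positive on a non-negligible set, the leading Neumann term $K_t * w(0)(x)$ is everywhere strictly positive for $t > 0$; the remaining terms are non-negative, so $\tilde w$, hence $w = \ee^{-\kappa t} \tilde w$, is pointwise strictly positive for all $(t,x)$ with $t > 0$. The most delicate point I foresee is justifying that the Duhamel identity genuinely characterizes $\tilde w$ as a weak solution of its linear equation; this is handled by the uniqueness of weak solutions for the inhomogeneous linear equation~\eqref{EqInhom} established in Section~\ref{Sec_Existe}.
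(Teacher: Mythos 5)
Your proposal is correct and, at the structural level, matches the paper's proof: both work with an exponentially tilted difference (your $\ee^{\kappa t}w$; the paper's $\wici:=\ee^{Mt}(\ov{u}-\und{u})$ with $M>\|F''\|_{\LL^\infty(\R)}$), both convert the differential inequalities into the Duhamel fixed-point identity with the explicit kernel $K_t$ --- justified, as you correctly anticipate, by uniqueness for the linear inhomogeneous problem of Section~\ref{Sec_Existe} --- and both obtain the strict inequality from the strict positivity of $K_t$ acting on initial data that are positive on a non-negligible set. Where you genuinely differ is the step extracting $\tilde w\geq 0$ from the identity $\tilde w=\tilde w_0+L[\tilde w]$: the paper estimates the negative part $\wici_-(t):=-\essinf_x \min(0,\wici(t,x))$, derives $\wici_-(t)\leq 2M\int_0^t \wici_-(s)\,\dd s$, and kills it by Gr\"onwall's lemma, whereas you make the zeroth-order coefficient $\mu=\kappa-c_0$ non-negative and sum a positivity-preserving Neumann series on a short time interval. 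Both devices are sound here; Gr\"onwall dispenses with the smallness condition on $T$ and the restarting-in-time argument, while your Neumann series makes every term visibly non-negative and hands you directly the lower bound $\tilde w(t,\cdot)\geq K_t * w(0,\cdot)$ on which the strict inequality rests (for $t$ beyond the first interval this uses the semigroup property $K_{t-T}*K_T=K_t$; also the operator norm of $L$ is bounded by $\|\mu\|_{\LL^\infty}T\leq 2\kappa T$ rather than $\kappa T$ --- both points are harmless).
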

      
      \begin{remark}
	\label{RqExiste}
      	Proposition~\ref{PropCompare} has an immediate corollary: 
      	Assume that~$u_0 \in \LLinf$ takes values in~$[\etal-\Delta_0,\etar+\Delta_0]$ and let ~$u \in \Espaceu$ be the unique solution to~\eqref{Wd} with the initial condition~$u_0$.
      	By \eqref{Bistable}, $\ov{u}:=x \mapsto \etar+\Delta_0$ and~$\und{u} :x \mapsto \etal-\Delta_0$ are respectively supersolutions and subsolutions to~\eqref{Wd}. Therefore, Proposition~\ref{PropCompare} implies that~$u(t,x) \in [\etal-\Delta_0,\etar+\Delta_0]$, for almost every~$t \in \R_+$ and~$x\in \R$, establishing thus (ii) of Theorem~\ref{ThCv}.
      \end{remark}
      
      Proposition~\ref{PropCompare} is shown thanks to the Duhamel Formula~\eqref{EqInhomExp} and Gr\"onwall's Lemma.
      \begin{proof}
	Let~$M>\lt\|F''\rt\|_{\LL^\infty(\R)}$. We set
	\begin{align}\label{DefW}
	  \wici(t,x):=\ee^{Mt} \lt(\ov{u}(t,x)-\und{u}(t,x)\rt),
	\end{align}
	and prove that~$\wici \geq 0$. 
	In view of~\eqref{HypoComp}, we have
	\begin{align*}
	  \partial_t \wici(t,x) + \dx \wici(t,x) = M \wici(t,x) - \ee^{Mt} \lt( F'(\ov{u}(t,x))-F'(\und{u}(t,x))+g(t,x\rt),
	\end{align*}
	where~$g(t,x):=e^{Mt}\lt(\ov{g}(t,x)-\und{g}(t,x)\rt) \geq 0$.
	Since the right-hand side of the latter equation is in~$\Espaceu$, then, using~\eqref{EqInhomExp}, one can express~$\wici(t,x)$ as
	\begin{align}
	  \nonumber
	  \wici(t,x)=& \lt\{K_t *\wici(0,\cdot) \rt\}(x)+ \Bigg\{ \int_0^t K_{t-s}*\Big[M \wici(s,\cdot)
	  \\
	  \label{Eq_SurW}
	  &-\ee^{Ms} \lt(F'(\ov{u}(s,\cdot))-F'(\und{u}(s,\cdot))+g(s,\cdot) \rt) \Big] \dd s\Bigg\}(x).
	\end{align}
	We introduce~$\wici_-(t):=-\essinf \lt\{ \min(0,\wici(t,x)), x \in \R \rt\}$. By Taylor expansion, for almost every~$(s,y) \in [0,t] \times \R$, there holds
	\begin{align}
	  \nonumber
	  M \wici(s,y) -\ee^{Ms} \lt(F'(\ov{u})-F'(\und{u}) \rt)(s,y) 
	  \geq & M \wici(s,y) - \lt\|F''\rt\|_{\LLinf} \ee^{Ms} \lt|\ov{u}(s,y)-\und{u}(s,y)\rt| 
	  \\
	  \nonumber
	  \geq & M\wici(s,y)-M|\wici(s,y)|
	  \\
	  \geq & -2M \wici_-(s).
	  \label{Youpi}
	\end{align}
	Therefore, using~\eqref{Eq_SurW}, since~$K_t$, $g$ and~$\wici(0,\cdot)$ are nonnegative, and since~$K_t$ is a probability measure for all~$t\geq0$, we obtain, for almost every~$t\in \R_+$ and~$x \in \R$,
	\begin{align*}
	  -\wici(t,x)) \leq 2M \int_0^t \wici_-(s)  \dd s,
	\end{align*}
	whence
	\begin{align}\label{PourGron}
	   \wici_-(t)\leq 2M \int_0^t \wici_-(s) \dd s.
	\end{align}
	Since~$\und{u}$, $\ov{u} \in \Espaceu$, then, $\wici_- \in \LL^{\infty}_{\loc}\lt(\R_+\rt)$. 
	Hence, by Gr\"onwall's Lemma, we deduce from~\eqref{PourGron} that~$\wici_-(t)=0$, for almost every~$t>0$.
	Injecting this information in~\eqref{Youpi}, and next in~\eqref{Eq_SurW}, yields
	$
	  \wici(t,x) \geq \lt\{K_t * \wici(0,\cdot)\rt\}(x).
	$
	As a consequence, as~$K_t$ is positive if~$t>0$ and as~$\wici(0,\cdot)=\ov{u}(0,\cdot)-\und{u}(0,\cdot)$ is nonnegative and positive on a non-negligible set, we deduce that~$\wici(t,x) >0$,
	for almost every~$t>0$ and~$x \in \R$.
	This implies~\eqref{Comp1}.
      \end{proof}

	Then, we establish a stronger version of the comparison principle, the proof of which mimicks that of Proposition~\ref{PropCompare}:
      \begin{corollary}
        Under the assumptions of Proposition~\ref{PropCompare}, there exists a positive decreasing function~$\rho$ such that, for all~$R>1$,
	\begin{align}\label{ExistRho}
	  \essinf_{x \in [-R,R]} \lt(\ov{u}(1,x) - \und{u}(1,x)\rt) \geq \rho(R) \int_{0}^1 \lt( \ov{u}(0,y)-\und{u}(0,y) \rt)\dd y.
	\end{align}
      \end{corollary}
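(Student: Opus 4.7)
The plan is to reuse the machinery developed in the proof of Proposition~\ref{PropCompare} and to simply retain a quantitative lower bound where that proof only recorded a sign. First I would again set $\wici(t,x) := e^{Mt}(\ov u(t,x) - \und u(t,x))$ with $M > \|F''\|_{\LLinf}$, and observe that the earlier argument has already established $\wici(t,\cdot) \geq 0$ almost everywhere. With this nonnegativity in hand, the pointwise bound \eqref{Youpi} sharpens to
$$M\wici(s,y) - e^{Ms}\bigl(F'(\ov u) - F'(\und u)\bigr)(s,y) \geq (M - \|F''\|_{\LLinf})\, \wici(s,y) \geq 0,$$
and the hypotheses $\und g \leq 0 \leq \ov g$ make $g(s,\cdot) = e^{Ms}(\ov g - \und g)(s,\cdot) \geq 0$ as well.

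Dropping these nonnegative terms from the Duhamel identity \eqref{Eq_SurW} at time $t=1$ then yields
$$\wici(1,x) \geq \bigl\{ K_1 * \wici(0,\cdot) \bigr\}(x) = \int_{\R} \frac{1}{\pi\bigl(1+(x-y)^2\bigr)} \bigl(\ov u(0,y) - \und u(0,y)\bigr)\, \dd y.$$
Unfolding the definition of $\wici$ gives a pointwise inequality for $\ov u(1,x) - \und u(1,x)$ with an explicit prefactor $e^{-M}/\pi$. Since the integrand is nonnegative, I may restrict the domain of integration to $y \in [0,1]$; for such $y$ and for $x \in [-R,R]$ the kernel is bounded below by $1/\bigl(\pi(1+(R+1)^2)\bigr)$. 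This immediately yields \eqref{ExistRho} with
$$\rho(R) := \frac{e^{-M}}{\pi\bigl(1+(R+1)^2\bigr)},$$
which is manifestly positive and decreasing in $R$ and depends only on $F$.

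There is no genuine obstacle here, but one conceptual point needs care: one cannot drop the nonlinear contribution by invoking $\wici \geq 0$ before that sign has been established. The argument therefore has to be stated as a sequel to Proposition~\ref{PropCompare} rather than as an independent derivation, using its conclusion as an input in the very first line. Because $\ov u, \und u \in \Espaceu$ and $M$ depends only on $\|F''\|_{\LLinf}$, no regularity, integrability, or $u_0$-dependent issue intervenes, and the constant $\rho(R)$ comes out universal in the sense claimed.
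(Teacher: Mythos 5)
Your proof is correct and follows essentially the same route as the paper's own (very terse) argument: both reuse $\wici$ from the proof of Proposition~\ref{PropCompare}, invoke the already-established nonnegativity of $\wici$ to drop the nonlinear and source terms in the Duhamel identity~\eqref{Eq_SurW}, and then bound the Poisson kernel $K_1$ from below on $[-R,R]\times[0,1]$. Your version merely makes explicit the constant $\rho(R)=e^{-M}/\bigl(\pi(1+(R+1)^2)\bigr)$ that the paper leaves implicit.
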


      \begin{proof}
	Introducing~$\wici$ defined by \eqref{DefW}, and using \eqref{Eq_SurW} and \eqref{Youpi}, we obtain
	\begin{align*}
	    \wici(t,x) \geq \lt\{K_t *\wici(0,\cdot) \rt\}(x),
	\end{align*}
	since~$\wici$ is nonnegative.
	By definition of~$K_t$ and of~$\wici$, it implies \eqref{ExistRho}.
      \end{proof}

    The proof of (iii) of Theorem~\ref{ThCv} is done after~\cite[Th.\ 3.1]{XinfuChen}, the proof of which we restrict here to our particular case. 
    By the previous steps, we already know that there exists a unique weak solution~$u(t,x)$ to~\eqref{Wd} (see Remark~\ref{RqExiste}), and we aim at establishing~\eqref{cv}.
    
    Namely, we build special sub-solutions and super-solutions to~\eqref{Wd} that are based on the existing solution to~\eqref{W} (see Lemma~\ref{Lem22} below).
    Then, we prove that both the vertical and the horizontal distances between these solutions are controlled (respectively~$2\delta_j$ and~$2l_j$ on Figure~\ref{FigSqueeze}, see also Lemma~\ref{Lem33} below).
    Using the fact that \eqref{Wd} is an autonomous system, we use the established control to iteratively build successive sub-solutions~$w_{-1}^j$ and super-solutions~$w_{+1}^j$ surrounding the actual solution~$u(t_j,x)$ to \eqref{Wd}.
    At each step~$j$, the distance between these sub-solutions and super-solutions is lowered.
    Thus, the solution~$u$ is \textit{squeezed} between these sub-solutions and super-solutions.
    As a consequence, when~$t$ goes to infinity, the solution~$u(t,\cdot)$ tends, up to a translation, to the solution to \eqref{W}.
    Because of the iterative nature of the squeezing, this convergence is achieved with exponential speed.
    \begin{figure}[ht] 
	\label{FigSqueeze}
	\begin{center}
		\includegraphics[width=\textwidth]{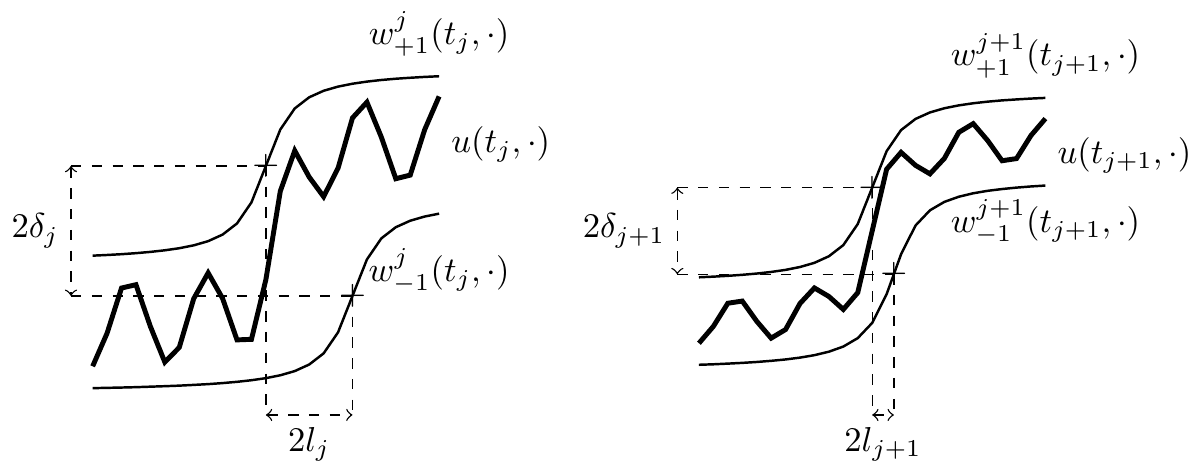}
        \end{center}
        \caption{Squeezing of~$u(t,x)$ solution to \eqref{Sd}.}
      \end{figure}
    
    \begin{lemma}[Lemma 2.2 of~\cite{XinfuChen}]
    \label{Lem22}
      Under the hypotheses of Theorem~\ref{ThCv}, let~$\Delta_1<\Delta_0$. Then, there exist positive constants~$\sigma$ and~$\beta$ 
      such that, for all~$\delta \in (0, \Delta_1)$ and~$l \in \R$, the functions~$w_{-1}(t,x)$ and~$w_{+1}(t,x)$ defined by
      \begin{align}
	\label{eq:Def_w}
          w_{i}(t,x)=\eta\lt(\zeta_{i}(t,x)\rt)+i\delta \ee^{-\beta t} && \pour i \in \{-1,+1\},
      \end{align}
      where
       \begin{align}
	    \label{DefZeta}
	    \zeta_i(t,x)=x-ct+il+i\sigma \delta \lt[1-\ee^{-\beta t}\rt]  && \pour i \in \{-1,+1\},
 	\end{align}
      are respectively a sub-solution and a super-solution to~\eqref{Wd}.
    \end{lemma}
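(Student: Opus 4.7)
The plan is to compute $\partial_t w_i + \dx w_i + F'(w_i)$ directly, show that it has the sign of $i$, and then choose $\sigma$ and $\beta$ accordingly. Since $\zeta_i(t,\cdot)$ is a spatial translation of the identity, and since $\dx$ is translation-invariant and annihilates constants (both properties being immediate from~\eqref{def2}), one has $\dx w_i(t,x) = (\dx\eta)(\zeta_i(t,x))$. Combining this with the identity $\dx\eta = c\eta' - F'(\eta)$ from~\eqref{W} and with $\partial_t\zeta_i = -c + i\sigma\delta\beta\ee^{-\beta t}$, a direct computation, followed by a Taylor expansion of $F'(w_i) - F'(\eta(\zeta_i))$ around $\eta(\zeta_i)$ (recall $w_i - \eta(\zeta_i) = i\delta\ee^{-\beta t}$), yields
\begin{equation*}
\partial_t w_i + \dx w_i + F'(w_i) = i\delta\ee^{-\beta t}\lt[\sigma\beta\,\eta'(\zeta_i(t,x)) + F''(\xi_i(t,x)) - \beta\rt],
\end{equation*}
for some $\xi_i(t,x)$ lying between $\eta(\zeta_i(t,x))$ and $w_i(t,x)$. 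The desired sign conditions for both $i = \pm 1$ then reduce to the \emph{same} inequality
\begin{equation*}
\sigma\beta\,\eta'(\zeta_i(t,x)) + F''(\xi_i(t,x)) \geq \beta \qquad\text{for all }(t,x) \in \R_+\times\R.
\end{equation*}

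To establish this, I would split the real line into a compact piece and a far piece. Fix $\eta_1 \in (\Delta_1,\Delta_0)$. Since $\eta(\zeta) \to \etal$ as $\zeta \to -\infty$ and $\eta(\zeta) \to \etar$ as $\zeta \to +\infty$, there exists $L > 0$ such that $|\eta(\zeta)-\etal| < \eta_1 - \Delta_1$ for $\zeta < -L$ and $|\eta(\zeta)-\etar| < \eta_1 - \Delta_1$ for $\zeta > L$. As $|w_i - \eta(\zeta_i)| \leq \delta < \Delta_1$, the point $\xi_i$ then lies within $\eta_1 < \Delta_0$ of $\etal$ (resp.\ $\etar$) whenever $\zeta_i < -L$ (resp.\ $\zeta_i > L$). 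By~\eqref{Puits} and continuity of $F''$, there exists $\mu_0 > 0$ such that $F'' \geq \mu_0$ on $[\etal-\eta_1,\etal+\eta_1] \cup [\etar-\eta_1,\etar+\eta_1]$. Choosing $\beta := \mu_0/2$, and simply dropping the nonnegative term $\sigma\beta\eta'(\zeta_i) \geq 0$, the inequality is immediate on $\{|\zeta_i| > L\}$ since $F''(\xi_i) - \beta \geq \mu_0/2 > 0$.

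On the compact region $\{|\zeta_i| \leq L\}$, set $\epsilon_0 := \min_{|\zeta|\leq L}\eta'(\zeta)$, which is strictly positive because $\eta'$ is continuous and $\eta' > 0$. Using the crude bound $F''(\xi_i) \geq -\|F''\|_{\LL^\infty(\R)}$, which is finite under the standing assumption $F \in \WW^{3,\infty}(\R)$ of Section~\ref{Sec_Converge}, the inequality is ensured provided that $\sigma\beta\epsilon_0 \geq \beta + \|F''\|_{\LL^\infty(\R)}$; any $\sigma \geq (\beta + \|F''\|_{\LL^\infty(\R)})/(\beta\epsilon_0)$ will do. The resulting $\sigma$ and $\beta$ depend only on $F$, $\eta$, and $\Delta_1$, but not on $\delta \in (0,\Delta_1)$ or $l \in \R$, as required.

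The only delicate point is the nested selection of constants guaranteeing that the intermediate value $\xi_i$ remains inside the well region of $F''$ uniformly for all $\delta \in (0,\Delta_1)$ and $l \in \R$; the decay factor $\ee^{-\beta t}$ keeps the perturbation bounded by $\delta < \Delta_1$, and the strict gap $\eta_1 - \Delta_1 > 0$ supplies the required cushion between the well depth $\Delta_0$ (where $F''$ is positive by~\eqref{Puits}) and the admissible perturbation $\Delta_1$. Everything else is routine bookkeeping once the identity at the end of the first paragraph is secured.
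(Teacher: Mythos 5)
Your proposal is correct and follows essentially the same route as the paper: the same computation reducing both sign conditions to $\sigma\beta\,\eta'(\zeta_i)+F''(\theta)-\beta\geq 0$, the same dichotomy between the far region (where the Taylor intermediate point stays in the convexity wells of $F$, handled by the gap between $\Delta_1$ and $\Delta_0$) and the compact region (where $\inf\eta'>0$ lets one choose $\sigma$ large enough to absorb $\beta+\|F''\|_{\LL^\infty(\R)}$). The only differences are cosmetic choices of the intermediate radius and of $\beta$.
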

    \begin{proof}[Proof of Lemma~\ref{Lem22}]
	As~$\AAA$ is invariant by translation, the variable~$l$  in the definition of~$\zeta$ plays no role. Hence, we take~$l=0$ in the proof below.
	We also impose for the moment~$\sigma\leq 1$.

      A straightforward computation yields
      \begin{align*}
        \lt(\partial_t + \dx \rt)w_{i}(t,x)=\lt(i \sigma\beta\delta\ee^{-\beta t/2}-c\rt)\eta'\lt(\zeta_i(t,x)\rt)-i\delta\beta\ee^{-\beta t} + \dx \eta\lt(\zeta_i(t,x)\rt),
      \end{align*}
      and, as~$\eta$ satisfies~\eqref{W},
      \begin{align*}
        \lt(\partial_t + \dx \rt)w_{i}(t,x)=i\sigma\beta\delta\ee^{-\beta t}\eta'\lt(\zeta_i(t,x)\rt) -i\beta\delta\ee^{-\beta t} - F'\lt( \eta\lt(\zeta_i(t,x)\rt)\rt).
      \end{align*}
      Thus
      \begin{align*}
	\lt(\partial_t - \AAA\rt)w_{i}(t,x)=&i\beta\delta \ee^{-\beta t}\lt[\sigma \eta'\lt(\zeta_i(t,x)\rt)-1\rt]
        +F'\lt(w_{i}(t,x)\rt)-F'\lt(\eta\lt(\zeta_i\lt(t,x\rt)\rt)\rt).
      \end{align*}
      By Taylor-Lagrange expansion, there exists a convex combination~$\theta^i(t,x)$ of~$\eta\lt(\zeta_i(t,x)\rt)$ and~$w_{i}(t,x)$ such that
      \begin{align}
        \label{Lem22_Eq1}
        \lt(\partial_t - \AAA\rt)w_{i}(t,x)=&i\delta \ee^{-\beta t}\lt[\beta\sigma \eta'\lt(\zeta_i(t,x)\rt)-\beta+ F''\lt(\theta^i(t,x)\rt)\rt].
      \end{align}
      
      Recall that~$\Delta_0>\Delta_1$. Then, by~\eqref{Relimite}, there exists~$R_0>0$ such that
      \begin{align*}
	   \lt\{
	   \begin{aligned}
	    &\lt|\eta(y)-\etal\rt|<(\Delta_0-\Delta_1)/2 \si y <-R_0,
	    \\
	    & \lt|\eta(y)-\etar \rt| <(\Delta_0-\Delta_1)/2  \si y >R_0.   
	   \end{aligned}
	    \rt.
      \end{align*}
      Therefore, as~$\sigma \delta<\Delta_1$, we also have
      \begin{align*}
        \lt\{
	  \begin{aligned}
	    &
	    \lt|w_{i}(t,x) -\etal \rt|\leq (\Delta_0+\Delta_1)/2< \Delta_0 &&\si \zeta_i(t,x)<-R_0,\\
	    &
	    \lt|w_{i}(t,x)-\etar\rt|\leq (\Delta_0+\Delta_1)/2< \Delta_0 &&\si \zeta_i(t,x)>R_0.\\
	  \end{aligned}
        \rt.
      \end{align*}
      Let
      \begin{align}
	\label{DefBeta}
        \beta:=\inf\big\{F''(v), |v-\etar| \leq \Delta_0 \quad\text{or}\quad |v-\etal| \leq \Delta_0\big\}
      \end{align}
      Thus, if~$\lt|\zeta_i(t,x)\rt|>R_0$, by definition of~$\beta$ and of~$\theta^i(t,x)$, $F''\lt(\theta^i(t,x)\rt)-\beta \geq 0$. Moreover, $\eta'>0$. Therefore
      \begin{align}
	\label{eq:Estima_A}
        \beta\sigma \eta'\lt(\zeta_i(t,x)\rt)+ F''\lt(\theta^i(t,x)\rt)-\beta \geq 0.
      \end{align}
      Now, we set
      \begin{align}\label{DefSigma}
        \sigma:= \min\lt\{\beta^{-1} \lt( \inf_{|y|<R_0} \eta'(y)\rt)^{-1} \lt(\lt\|F''\rt\|_{\LLinf}+\beta \rt),1 \rt\}.
      \end{align}
      Therefore, if~$\lt|\zeta_i(t,x)\rt| \leq R_0$, we also have~\eqref{eq:Estima_A}.
      As a conclusion, in any case, \eqref{Lem22_Eq1} and~\eqref{eq:Estima_A} yield
      \begin{align*}
        i\lt(\partial_t -\AAA \rt)w_{i}(t,x) \geq 0,
      \end{align*}
      which implies that~$w_{+1}$ and~$w_{-1}$ are respectively a super-solution and a sub-solution to~\eqref{Wd}.
    \end{proof}

    Using Lemma~\ref{Lem22}, it is eventually possible to squeeze a solution~$u(t,x)$ of~\eqref{Wd} between a sub-solution and a super-solution. The following Lemma explains how this squeezing is tightened:

    \begin{lemma}[Lemma 3.3 of~\cite{XinfuChen}] \label{Lem33}
     Under the hypotheses of Theorem~\ref{ThCv}, let~$\Delta_1<\Delta_0$.
      Assume that there exist~$\xi \in \R$, $\delta \in (0,\Delta_1)$ and~$l \in [0,L]$ for fixed~$L$ such that, for all~$x \in \R$,
      \begin{align}
        \label{Lem33_Eq1}
        \eta(x-l) - \delta  \leq u(0,x) \leq \eta(x+l)+\delta.
      \end{align}
      Then, taking~$\beta$ and~$\sigma$ as in Lemma~\ref{Lem22}, there exist a positive constant~$\epsilon_*$ depending only on~$\eta$, $F'$, and~$L$, and parameters~$\tilde\xi$, $\tilde\delta$, $\tilde{l}$ satisfying
      \begin{align*}
	&\lt|\tilde{\xi}\rt| \leq l, \quad \tilde{\delta}= \delta+\epsilon_* \min(1,l) \ee^{\beta},
        \et 0\leq \tilde{l}\leq l+\sigma\delta-\frac{\sigma\epsilon_* \min(1,l)}{2},
      \end{align*}
      such that, for all~$t \geq 1$ and~$x \in \R$,
      \begin{align}
	    \label{Lem33_Res}
	    \eta(x-\tilde\xi-\tilde{l}-ct) -\tilde{\delta} \ee^{-\beta t}\leq u(t,x) \leq \eta(x-\tilde\xi+\tilde{l} -ct)+\tilde{\delta}  \ee^{-\beta t}.
      \end{align}
    \end{lemma}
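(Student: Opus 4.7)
The approach is Chen's \emph{squeezing} method, resting on Lemma~\ref{Lem22}, the comparison principle Proposition~\ref{PropCompare}, and its corollary~\eqref{ExistRho}. By translation invariance of $\AAA$ one may set $\xi=0$. The first step is the crude sandwich: Lemma~\ref{Lem22}, applied with the given parameters $\delta$ and $l$, yields sub- and super-solutions $w_{\pm 1}$ whose values at $t=0$ are $\eta(x\mp l)\mp\delta$, so \eqref{Lem33_Eq1} and Proposition~\ref{PropCompare} give $w_{-1}(t,x)\leq u(t,x)\leq w_{+1}(t,x)$ for all $t\geq 0$. This crude sandwich already yields~\eqref{Lem33_Res} with $\tilde\xi=0$, $\tilde l=l+\sigma\delta$, $\tilde\delta=\delta$, so the real task is to shave a quantity $\sigma\epsilon_*\min(1,l)/2$ off $\tilde l$, at the cost of enlarging $\tilde\delta$ by $\epsilon_*\min(1,l)\,\ee^{\beta}$.

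The quantitative input is an \emph{initial gap} of order $\min(1,l)$: there exist $y_0$ in a compact set and $\epsilon_0>0$, both depending only on $\eta$, $F$ and $L$, such that either
\begin{align*}
\int_{y_0}^{y_0+1}\bigl(u(0,y)-w_{-1}(0,y)\bigr)\,\dd y \geq \epsilon_0 \min(1,l)
\quad\text{or}\quad
\int_{y_0}^{y_0+1}\bigl(w_{+1}(0,y)-u(0,y)\bigr)\,\dd y \geq \epsilon_0 \min(1,l).
\end{align*}
To prove this, I would pick $x_0$ in a compact neighborhood where $\eta'\geq\alpha_0>0$ (such a neighborhood exists since $\eta'>0$ and $\eta$ joins $\etal$ to $\etar$), with $\eta(x_0)$ equal to the midpoint $(\etal+\etar)/2$; a dichotomy on whether the average of $u(0,\cdot)$ over $[x_0,x_0+1]$ lies above or below the average of $\eta$ yields the claim, using $\eta(y+l)-\eta(y-l)\geq 2\alpha_0\min(1,l)$ on the compact set. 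The corollary~\eqref{ExistRho} applied to $(u,w_{-1})$ or $(w_{+1},u)$ then propagates this integral gap to a uniform pointwise estimate of size $\rho(R)\epsilon_0\min(1,l)$ on $[y_0-R,y_0+R]$ at $t=1$.

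The final step converts this extra vertical slack into a horizontal shift of $\eta$ of amplitude $a=\sigma\epsilon_*\min(1,l)/2$: inside $[y_0-R,y_0+R]$ the slack absorbs the shift since $\sup_\R\eta'<\infty$ (from Lemma~\ref{LemGui} and~\eqref{O2}); outside this interval, the decay~\eqref{O1} of $\eta-\etal$ and $\etar-\eta$ ensures the shift changes $\eta$ by at most $\epsilon_*\min(1,l)$, which is absorbed by the enlarged $\tilde\delta$ (the factor $\ee^{\beta}$ arises from expressing the vertical slack at $t=1$ in the form $\tilde\delta\,\ee^{-\beta t}$). The drift $\tilde\xi$ of the center of the sandwich satisfies $|\tilde\xi|\leq l$ since the improvement fits within the original half-width. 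For $t\geq 1$, re-applying Lemma~\ref{Lem22} at initial time $t=1$ with the new tightened sandwich (using translation invariance in $t$) yields~\eqref{Lem33_Res} for all $t\geq 1$. The most delicate point is this last step: $R$ must be chosen large enough for~\eqref{O1} to control the tails of $\eta$, yet small enough that $\rho(R)$ still provides a non-trivial lower bound. The algebraic decay~\eqref{O1} is precisely what permits this balance and fixes the admissible value of $\epsilon_*$.
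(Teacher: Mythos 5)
Your overall strategy is the paper's: the crude sandwich from Lemma~\ref{Lem22} and Proposition~\ref{PropCompare}, the integral dichotomy producing an initial gap of order $\min(1,l)$, its propagation to a pointwise gap at $t=1$ on a compact interval via~\eqref{ExistRho}, the conversion of that vertical slack into a horizontal shift of $\eta$, and a final application of Lemma~\ref{Lem22} from time $1$. The step that fails as written is the tail estimate outside the compact interval. You invoke the decay~\eqref{O1} of $\eta-\etal$ and $\etar-\eta$ to claim that shifting the argument of $\eta$ by $a=\sigma\epsilon_*\min(1,l)/2$ changes its value by at most $\epsilon_*\min(1,l)$ there. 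Two problems. First, \eqref{O1} and~\eqref{O2} are not available: the lemma is stated under the hypotheses of Theorem~\ref{ThCv}, whose part (iii) assumes only $\eta'>0$ and the qualitative decay~\eqref{O21}. Second, and more seriously, estimating $|\eta(y+a)-\eta(y)|$ by the sum of the distances of $\eta(y+a)$ and $\eta(y)$ to the limit value gives a bound of order $1/R$ that is \emph{independent of} $a$; forcing it below $\epsilon_*\min(1,l)$ would require $R$ of order $1/(\epsilon_*\min(1,l))$, so $R$, hence $\rho(R)$, hence $\epsilon_*$ would depend on $l$ and degenerate as $l\to 0$ --- exactly the uniformity that the squeezing iteration cannot afford to lose, since Step 3 of the proof of Theorem~\ref{ThCv} sends $l_j\to 0$. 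The correct tail bound, and the one the paper uses, is $|\eta(y+a)-\eta(y)|\leq a\,\sup_{|z|>R_1}\eta'(z)$: by~\eqref{O21} one fixes $R_1$ (independent of $l$) with $2\sigma\eta'\leq 1$ for $|z|>R_1$, so the change is at most $a/(2\sigma)\leq\epsilon_*\min(1,l)$, a bound that automatically carries the factor $\min(1,l)$ because it is proportional to the shift itself.

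Two smaller remarks. The ``delicate balance'' you describe between taking $R$ large enough to control the tails and keeping $\rho(R)$ nontrivial is not actually present once the tail is handled through $\eta'$: $R_1$ is fixed by~\eqref{O21} alone, then $R_2:=R_1+L+|c|+1+\sigma\Delta_0$ is fixed, and only afterwards is $\epsilon_*$ defined in terms of $\rho(R_2)$, so no balancing act is required. Also, $\sup_{\R}\eta'<\infty$ already follows from $\eta\in\CC^2(\R)$ together with~\eqref{O21}; you do not need Lemma~\ref{LemGui} or~\eqref{O2} for that. The remainder of your outline --- the dichotomy, the use of~\eqref{ExistRho}, the bookkeeping of $\tilde\xi$, $\tilde\delta$, $\tilde{l}$, and the origin of the factor $\ee^{\beta}$ --- matches the paper's proof.
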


    \begin{proof}
	Thanks to Lemma~\ref{Lem22}, the functions~$w_{+1}$ and~$w_{-1}$ defined by~\eqref{eq:Def_w}, for~$\zeta_i$  defined by~\eqref{DefZeta},
	are respectively a super-solution and a sub-solution to~\eqref{Wd}. Using~\eqref{Lem33_Eq1}, it follows from Proposition~\ref{PropCompare} that, for all~$t \geq 0$ and~$x \in \R$,
	\begin{align}\label{Compare2}
	    w_{-1}(t,x) \leq u(t,x) \leq w_{+1}(t,x).
	\end{align}

	Let~$\barl:=\min(1,l)$ and~$\epsilon_1:=\inf_{x \in [-1,2]} \eta'(x)$. Since~$\eta$ is increasing, a Taylor expansion yields
	\begin{align*}
	    \int_{0}^1 \lt(\eta(x+l) - \eta(x-l)\rt) \dd x \geq  \int_0^1 \lt(\eta\lt(x+\barl\rt) - \eta\lt(x-\barl\rt)\rt) \dd x \geq  2\epsilon_1 \barl.
	\end{align*}
	Therefore, at least one of the following estimates is true
	\begin{align*}
	    \int_{0}^1 \lt(u(0,x)-\eta(x-l)\rt) \dd x \geq \epsilon_1 \barl 
	    \quad \text{or} \quad 
	    \int_{0}^1 \lt(\eta(x+l)-u(0,x)\rt) \dd x \geq \epsilon_1\barl.
	\end{align*}
	Hereafter, we only consider the first case, as the second one is similar. 
	First, using~\eqref{O21}, there exists~$R_1$ such that
	\begin{align}
	  \label{DefR02}
	    2\sigma \eta'(x) \leq 1 \si |x|>R_1
	\end{align}
	Let~$R_2 :=R_1+L+|c|+1+\sigma\Delta_0$.
	On the one hand, invoking Proposition~\ref{PropCompare},
	we compare~$u$ and~$w_{-1}$ on~$[-R_2 ,R_2]$
	\begin{align}
	    \inf_{x \in [-R_2,R_2]}\lt\{ u(1,x)-  \eta\lt(\zeta_{-1}(1,x)\rt) + \delta \ee^{-\beta}\rt\}
	    &\geq \rho(R_2) \int_{0}^1\lt[u(0,y)-\eta(y-l)+\delta  \rt] \dd y
	    \nonumber
	    \\
	    &\geq \rho(R_2) \epsilon_1 \barl.
	    \label{Lem33_Eq2}
	\end{align}
	We define
	\begin{align}
	    \label{DefEpsilonStar}
	    \epsilon_* := \min\lt(\Delta_1\lt(1-\ee^{-\beta} \rt) , \frac{1}{2\sigma},\frac{\rho(R_2)\epsilon_1}{2\sigma} \lt\|\eta'\rt\|_{\LLinf}^{-1} \rt).
	\end{align}
	 As a consequence, if~$|x|<R_2$, \eqref{Lem33_Eq2} yields
	\begin{align}
	    u(1,x) - \eta\lt(\zeta_{-1}(1,x)+ 2\epsilon_*  \sigma \barl\rt) + \delta \ee^{-\beta}
	    &\geq \rho(R_2)\epsilon_1 \barl - 2\epsilon_* \sigma \barl\lt\|\eta'\rt\|_{\LLinf}
	    \label{Estim1_Boucle}
	    \geq 0.
	\end{align}
	On the other hand, if~$|x|>R_2$, then ~$|\zeta_{-1}(1,x)|\geq R_1+1$ whence, by definition of~$\epsilon^*$, $\lt|\zeta_{-1}(1,x)+2\epsilon_*\sigma \barl\rt| \geq R_1$.
	Inequality~\eqref{Compare2} and Definition~\eqref{DefR02} then imply that
	\begin{align}
	    u(1,x)-\eta\lt(\zeta_{-1}(1,x)+2\epsilon_* \sigma \barl\rt) +\delta \ee^{-\beta} 
	    & \geq u(1,x) -w_{-1}(1,x) -\epsilon_* \barl
	    \label{Estim2_Boucle}
	    &\geq -\epsilon_* \barl.
	\end{align}
	Therefore, from~\eqref{Estim1_Boucle} and from~\eqref{Estim2_Boucle}, it appears that, for all~$x \in \R$, there holds
	\begin{align*}
	    u(1,x) \geq  \eta\lt(\zeta_{-1}(1,x)+2\epsilon_*\sigma \barl \rt) - \lt( \epsilon_* \barl+\delta \ee^{-\beta}\rt).
	\end{align*}
	We set
	\begin{align*}
	  \tilde{\delta}:= \lt(\delta\ee^{-\beta}+\epsilon_* \barl\rt) \ee^{\beta},
	\end{align*}
	which, thanks to~\eqref{DefEpsilonStar}, satisfies~$\tilde{\delta} \ee^{-\beta}\leq \Delta_1$.
	Applying once more Lemma \ref{Lem22} yields, for all~$t \geq 1$,
	\begin{align}
	  \label{Controllu}
	    u(t,x) \geq \eta\lt(\zeta_{-1}(1,x)-c(t-1) 
	    +2\epsilon_* \sigma \barl 
	    - \sigma \tilde{\delta} \ee^{-\beta} \lt[1 -\ee^{-\beta (t-1)} \rt] \rt) - \tilde{\delta} \ee^{-\beta t}.
	\end{align}
	By definition of~$\tilde{\delta}$ and~$\zeta_{-1}$, the argument of~$\eta$ in the above estimate is
	\begin{align}
	  \nonumber
	  &x-ct-l-\sigma\delta\lt[1-\ee^{-\beta}\rt] +2\epsilon_* \sigma \barl 
	  -\sigma \tilde{\delta} \ee^{-\beta} \lt[1 -\ee^{-\beta (t-1)} \rt]
	  \\
	  &\geq x - ct -\lt[l + \sigma \delta-\sigma\epsilon_* \barl \rt].
	  \label{Croissance}
	\end{align}
	Defining now
	\begin{align*}
	    &\quad \tilde{\xi} :=- \frac{\sigma\epsilon_* \barl}{2}, \et
	    \tilde{l} := l +\sigma \delta - \frac{ \sigma \epsilon_*\barl}{2},
	\end{align*}
	and bearing in mind that~$\eta$ is increasing, we deduce from~\eqref{Controllu} and~\eqref{Croissance} that
	\begin{align}
	   \label{Lem33_Res1}
	    u(t,x)
	    \geq&  \eta\lt( x - ct - \tilde{\xi}-\tilde{l}\rt) - \tilde{\delta} \ee^{-\beta t}.
	\end{align}
	Moreover, recalling~\eqref{Compare2}, we have
	\begin{align}
	   \label{Lem33_Res2}
	    u(t,x) \leq & \eta\lt(x-ct+l + \sigma \delta  \rt) + \delta \ee^{-\beta t}
	    \leq \eta\lt(x-ct+\tilde{l} -\tilde{\xi}  \rt) + \tilde{\delta} \ee^{-\beta t}.
	\end{align}
	As a consequence, we obtain the desired result~\eqref{Lem33_Res} from~\eqref{Lem33_Res1} and~\eqref{Lem33_Res2}.
    \end{proof}

    We are now in position to finish the proof of Theorem~\ref{ThCv}. The proof is done while iterating Lemma~\ref{Lem33}, which gradually tightens the squeezing around~$u(t,x)$.

    \begin{proof}[Proof of  (iii) of Theorem~\ref{ThCv} (restriction of the proof of Theorem 3.1 of~\cite{XinfuChen})]
      We proceed in four steps, lowering iteratively in time the values ~$\delta$ and~$l$ such that, for all~$x \in \R$, there holds
      \begin{align}
        \label{Compare_Base}
        \eta\lt(x-ct - \xi- l\rt)-\delta \leq u(t,x) \leq \eta\lt(x-ct-\xi+l\rt)+\delta.
      \end{align}
      
      \paragraph{Step 1}
	By assumption~\eqref{Puits2} and since~$\eta$ is increasing from~$\etal$ to~$\etar$, there exist~$\Delta_1<\Delta_0$, and~$L>1$ sufficiently large such that~\eqref{Compare_Base} holds with
	\begin{align*}
	  t=t_1:=0,  \quad \delta=\delta_1 := \Delta_1, \quad \xi=\xi_1:=0, \et l=l_1:=L-\sigma\Delta_0.
	\end{align*}
      
      \paragraph{Step 2}
	We define
	\begin{align}
	  \label{DefKappaStar}
	  \delta_*:=\min\lt(\Delta_1,\epsilon_*/4\rt) \et \kappa_*:=\sigma \epsilon_*/2 - \sigma \delta_* \geq \sigma \epsilon_*/4 >0.
	\end{align}
	Also, we set~$t_* \geq 2$ such that
	\begin{align}
	  \label{DeftStar}
	  \ee^{-\beta t_*} \lt(1+\epsilon_*/\delta_*\rt) \ee^{\beta} \leq 1-\kappa_*.
	\end{align}
	Using Lemma~\ref{Lem22}, we deduce from the previous step that there exists~$t_2$ sufficiently high such that~\eqref{Compare_Base} holds with~$t=t_2$, $\delta=\delta_2=\delta_*$ and for a certain~$\xi \in \R$ and~$l=l_2 \leq L$ (as~$\epsilon_*$ implicitly depends on~$L$, we further ensures that~$l_j \leq L$, for all~$j$).
	
	If~$l_2\leq 1$, one directly goes to Step 3. Otherwise, as long as~$l_j>1$, one applies Lemma~\ref{Lem33} at time~$t_j=t_2+(j-2)t_*$ (recall that \eqref{Wd} is an autonomous evolution equation), $\delta_j=\delta_*$ and get, by~\eqref{DefKappaStar} and~\eqref{DeftStar}, that~\eqref{Compare_Base} holds for~$t=t_{j+1}$ with
	 ~$l \leq l_{j} - \kappa_*$ and~$\delta \leq (1-\kappa_*)\delta_*$.
	Therefore, one can take~$\delta_{j+1}:=\delta_*$, $l_{j+1}:= l_{j}-\kappa_*$ and iterate until~$l_j<1$.
	
      \paragraph{Step 3}
	By Step 2, we have an index~$j_0$ such that~\eqref{Compare_Base} holds for~$t=t_{j_0}$, $\delta=\delta_{j_0}=\delta_*$, $\xi_{j_0} \in \R$ and~$l=l_{j_0}= 1$. Using Lemma~\ref{Lem33} and Definitions~\eqref{DefKappaStar} and~\eqref{DeftStar}, a straightforward computation inductively shows that,  for all~$j \geq 0$, Inequalities \eqref{Compare_Base} hold  for~$t=t_{j+j_0}$, $\delta=\delta_{j+j_0}$, and~$l=l_{j+j_0}$ being defined by
	\begin{align}
	  \label{HypoRec}
	  t_{j+j_0}:=t_{j_0}+jt_*, \quad \delta_{j_0+j}:=(1-\kappa_*)^j\delta_* \et l_{j+j_0}:=(1-\kappa_*)^j,
	\end{align}
	and for~$\xi = \xi_j$ such that
	 ~$\lt|\xi_{j_0+j+1}-\xi_{j_0+j} \rt| \leq (1-\kappa_*)^{j}$.
	
      \paragraph{Step 4}
	We have shown that~\eqref{Compare_Base} holds for~$\lt(t,\delta,l\rt)=\lt(t_j,\delta_j,l_j\rt)$, for all~$j \geq 0$. 
	For~$t>0$, we associate~$j$ implicitly defined by~$t \in [t_j,t_{j+1})$.
	Thus, we deduce from Lemma~\ref{Lem22} that~\eqref{Compare_Base} also holds for~$t$, $\delta=\delta_j$, $\xi=\xi_j$, and~$l=l_j+\sigma\delta_j$.
	Taking Step 3 into account yields,  for all~$j>j_0$,
	\begin{align*}
	  &\delta  \leq \delta_* (1-\kappa_*)^{j-j_0} \et l  \leq \lt(1+\sigma \delta_*\rt) (1-\kappa_*)^{j-j_0}.
	\end{align*}
	Moreover, $\xi_j$ converges to~$\xi_{\infty}$ and, for all~$j>j_0$,
	\begin{align*}
	  \lt|\xi_j-\xi_{\infty}\rt| \leq \kappa_*^{-1} (1-\kappa_*)^{j-j_0}.
	\end{align*}
	Yet, a simple calculation shows
	\begin{align*}
	  (1-\kappa_*)^{j-j_0}= \lt(1-\kappa_* \rt)^{-j_0-t_{j_0}/t_*}\exp \lt(t\ln(1-\kappa_*)/t_* \rt).
	\end{align*}	
	Setting~$\kappa:=-\ln(1-\kappa_*)/t_*>0$ and
	\begin{align*}
	   &K:=\lt[\delta_* + \lt(1+\sigma \delta_*+ \kappa_*^{-1} \rt)\lt\|\eta'\rt\|_{\LLinf}\rt]  \lt(1-\kappa_* \rt)^{-j_0-t_{j_0}/t_*},
	\end{align*}
	we obtain, for all~$t\geq t_{j_0}$, $t \in [t_j,t_{j+1})$,
	\begin{align*}
	  \sup_{x \in \R} \lt|\eta(x-ct-\xi_{\infty})-u(t,x) \rt| 
	  \leq& \delta_j + \lt(\lt|l_j\rt|+\lt|\xi_j-\xi_\infty\rt|\rt) \lt\|\eta'\rt\|_{\LLinf}
	  \leq K \ee^{-\kappa t}.
	\end{align*}
	This concludes the proof of Theorem~\ref{ThCv}.
    \end{proof}

      \paragraph{Acknowledgements}
	We would like to thank Claude Le Bris for his advice, and Yves-Patrick Pellegrini for his kindness and for providing a physical insight into the Weertman equation.
      
\bibliographystyle{plain}
% \bibliography{Bib}
\def\cprime{$'$}

\end{document}